\newtheorem{theorem}{Theorem}[section]
\newtheorem{lemma}[theorem]{Lemma}
\theoremstyle{definition}
\newtheorem{definition}[theorem]{Definition}
\newtheorem{corollary}{Corollary}[theorem]
\newcommand\ddfrac[2]{\frac{\displaystyle #1}{\displaystyle #2}}
\theoremstyle{remark}
\numberwithin{equation}{section}
\newcommand{\fallingfactorial}[1]{%
  ^{\underline{#1}}%
}
\title{The Generalized Superfactorial, Hyperfactorial and Primorial Functions} 
\author{Vignesh Raman}
\begin{document}
\maketitle
\begin{abstract}
This paper introduces a new generalized superfactorial function (referable to as $n^{th}$- degree superfactorial: $sf^{(n)}(x)${}) and a generalized hyperfactorial function (referable to as $n^{th}$- degree hyperfactorial: $H^{(n)}(x)${}), and we show that these functions possess explicit formulae involving figurate numbers. Besides discussing additional number patterns, we also introduce a generalized primorial function and 2 related theorems. Note that the superfactorial definition offered by Sloane and Plouffe (1995) is the definition considered (and not Clifford Pickover's (1995) superfactorial function: $n\$$).
\end{abstract}
\maketitle
\specialsection{\textbf{Introduction}}
\bigskip

Factorials, their numerous extensions into the complex region \footnote{Hadamard's Gamma function: $\Gamma(z)$ and the Luschny Factorial function: $L(z)$} and related functions \footnote{ Superfactorials: $sf(n)$; Hyperfactorials: $H(n)$; Multifactorials: $n!^{(k)}$} find themselves numerous applications in Combinatorics, Number Theory (NT), Functional Analysis, Probability Theory - the list goes on. Unfortunately, not much research has been done on factorial-related functions in recent literature due to two possible reasons:       
\vspace{3mm}
\begin{enumerate}
\item Current research in NT is more oriented towards Analytical NT. 
\vspace{1mm}
\item Current requirements for research and advancement in Combinatorics, NT and Diophantine Analysis (including algebraic manipulations) are satisfied by the current knowledge possessed about factorials and its closely related functions; it may be so that effort of the mathematical community is being solely directed towards research about functions behaving as extensions of the factorial function into other number systems (eg: \textit{Luschny factorial}), owing to their ready utility in applied math/real world applications.     
\end{enumerate}
\vspace{2mm}
Research pertaining to r-simplex (figurate) numbers and factorial-related functions motivated the investigation into their possible relation to  extensions of the superfactorial and hyperfactorial functions (formulated in this very paper).
\vspace{5mm}

We review certain factorial-related functions and figurate numbers in section 2, followed by the introduction of the generalized superfactorial function and a proof for its explicit formula in section 3. Section 4 will feature a similar treatment of the generalized hyperfactorial function. 

Section 5 will visually elucidate the aforementioned functions and explore two other number product-patterns involving factorials, and Section 6 will focus on an extension of the primorial function. Section 7 involves application of group theory and modular arithmetic lemmas to formulate two theorems concerning the superfactorial functions. Finally, Section 8 introduces an extension of the Legendre formula to shed light on the factorization of generalized superfactorials.
\specialsection{\textbf{Factorial-related functions and Figurate Numbers}}
\bigskip
\subsection{The Superfactorial Function}

\begin{definition}
The superfactorial function \textsuperscript{\cite{supfac}} is defined as the product of the first \textit{n} factorials, i.e.\begin{equation}
sf(n)=\prod^n_{k=1}k! \hspace{10mm}n \in \mathbb{Z}^+,\hspace{2mm}sf(0)=1
\end{equation}
It can also be defined recursively as follows:
\begin{equation*}
sf(n+1) = (n+1)!(sf(n));\hspace{3mm}n+1 \in \mathbb{Z}^+,\hspace{3mm} sf(0)=1
\end{equation*}
\end{definition}
\underline{Supplementaries}: 
\begin{itemize}
\item It is analogous to the tetrahedral numbers,- it represents the products of factorials akin to the tetrahedral numbers representing the sum of summations of successive natural numbers. 
\item It can also alternatively be represented as (on refactorization): \begin{equation*}
sf(n)=\prod^n_{k=1}k^{n-k+1},\hspace{3mm}n \in \mathbb{Z}^+
\end{equation*}
\item Its asymptotic growth is approximately\textsuperscript{\cite{sloane1}}: $$e^{\zeta'(-1)-\frac{3}{4}-\frac{3}{4n^2}-\frac{3}{2n}}*(2\pi)^{\frac{1}{2}+\frac{1}{2n}}(n+1)^{\frac{1}{2n^2}+n+\frac{5}{12}}$$
\end{itemize}
\bigskip
\subsection{The Hyperfactorial function}
\begin{definition}
The hyperfactorial function\textsuperscript{\cite{hypfac}} is defined as the product of numbers from $1$ to $n$ raised to the power of themselves, i.e \begin{equation}
H(n)=\prod^{n}_{k=1}k^{k},\hspace{3mm}n \in \mathbb{Z}^+
\end{equation}
It can also be defined as follows:
\begin{equation*}
H(n)=\prod^{n}_{k=1} {}^nP_k,\hspace{3mm}n \in \mathbb{Z}^+,\hspace{3mm}{}^nP_k=\frac{n!}{(n-k)!}
\end{equation*}
\end{definition}
\underline{Supplementaries}:
\begin{itemize}
\item The hyperfactorial sequence for natural numbers gives the discriminants for the probabilists' Hermite polynomial\textsuperscript{\cite{sloane2}}: $$H_{e_n}(x)=\frac{n!}{2\pi i}\oint_C\frac{e^{tx-\frac{tx^2}{2}}}{t^{n+1}}$$
\item Its asymptotic growth rate is approximately\textsuperscript{\cite{sloane3}}: \begin{equation*}
An^{(6n^2+6n+1)/12}e^{-n^2/4};\hspace{4mm}A{}\rightarrow \text{Glaisher Kinkelin constant}
\end{equation*}
\end{itemize}
\subsection{Figurate numbers}
\begin{definition}
Figurate numbers are those numbers that can be represented by a regular geometrical arrangement of equally spaced points.
\begin{figure}
\centering
\includegraphics{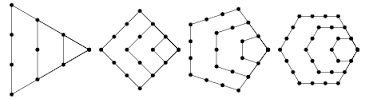}
\caption{Polygonal numbers}
\label{Fig:1}
\end{figure}
\end{definition}
\underline{Supplementaries}: \begin{itemize}
\item The $n^{th}$ regular $r$-polytopic number (i.e. figurate numbers for the r-dimensional analogs of triangles) is given by\textsuperscript{\cite{fignum}}:
\begin{equation}
P_r(n)=\binom{n+r-1}{r}
\end{equation}
\item The above formula essentially implies that (can be verified on addition of each term's respective binomial-coefficient representation): \begin{equation}
P_{r+1}(k)=\sum^k_{i=1}P_r(i) \label{eq:main}
\end{equation}
\item If the arrangement forms a regular polygon, the number is called a polygonal number. (Figure \ref{Fig:1})\textsuperscript{\cite{sloane4}}. 
\end{itemize}

\begin{center}
\line(1,0){360}
\end{center}
\specialsection{\textbf{The Generalized Superfactorial Function - GSF}}
\begin{definition}
Define the $n^{th}$-degree superfactorial as follows: 
\begin{equation}
sf^{(n)}(x)=\prod^{x}_{k=1}sf^{(n-1)}(k), \hspace{2mm}n \in \mathbb{Z}^+,\hspace{2mm}sf^{(0)}(x)\equiv x!
\end{equation}
We see that the first degree superfactorial is equivalent to the superfactorial definition given by Sloane and Plouffe (1995):
\begin{equation*}
sf^{(1)}(x)=sf(x)=\prod^{x}_{k=1}k!,\hspace{2mm}x \in \mathbb{Z}^+
\end{equation*}
\end{definition}
We now give an explicit formula for the GSF.
\begin{theorem}
The generalized superfactorial function is related to the r-simplex numbers (i.e.figurate numbers, $P_n(k) $) as follows:
\begin{equation*}
sf^{(n)}(x)=\prod^x_{k=1}[(x-k)+1]^{P_n(k)}=\prod^x_{k=1}k^{P_n((x-k)+1)};\hspace{2mm}x \in \mathbb{Z}^+; n \in \mathbb{Z}^{\geq} 
\end{equation*}
\end{theorem}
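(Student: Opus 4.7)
The plan is to prove the theorem by induction on $n$, leveraging the figurate-number recurrence (\ref{eq:main}). The second equality in the theorem is purely a reindexing ($k \mapsto x-k+1$), so I would focus the argument on the form $sf^{(n)}(x)=\prod_{k=1}^{x}k^{P_n(x-k+1)}$ and derive the other form as a corollary.

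For the base case $n=0$, note that $P_0(k)=\binom{k-1}{0}=1$ for every $k\geq 1$, so the right-hand side collapses to $\prod_{k=1}^{x}k^{1}=x!=sf^{(0)}(x)$, matching the definition. This is a quick verification.

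For the inductive step, I would assume the formula for $sf^{(n-1)}(m)$ and expand the definition
\begin{equation*}
sf^{(n)}(x)=\prod_{m=1}^{x}sf^{(n-1)}(m)=\prod_{m=1}^{x}\prod_{k=1}^{m}k^{P_{n-1}(m-k+1)}.
\end{equation*}
The key move is to interchange the order of the double product: for each fixed $k\in\{1,\dots,x\}$, the index $m$ ranges over $\{k,k+1,\dots,x\}$, which after the substitution $j=m-k+1$ becomes $j\in\{1,\dots,x-k+1\}$. This yields
\begin{equation*}
sf^{(n)}(x)=\prod_{k=1}^{x}k^{\sum_{j=1}^{x-k+1}P_{n-1}(j)}.
\end{equation*}
Now I would apply the figurate-number identity (\ref{eq:main}), namely $\sum_{j=1}^{x-k+1}P_{n-1}(j)=P_n(x-k+1)$, to obtain the desired closed form. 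Finally, reindexing $k\mapsto x-k+1$ converts the expression into $\prod_{k=1}^{x}[(x-k)+1]^{P_n(k)}$, completing the equivalence of the two stated forms.

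I do not anticipate a genuine obstacle: the delicate step is bookkeeping the index swap and the shift $j=m-k+1$ so that the inner sum matches the hypothesis of (\ref{eq:main}) exactly. The only subtlety worth double-checking is that the identity (\ref{eq:main}) is applied at the correct degree ($n-1 \to n$), i.e., that summing $P_{n-1}$ over $1,\dots,N$ yields $P_n(N)$, which is the content of the recurrence as stated in Section 2.
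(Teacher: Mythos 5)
Your proposal is correct and follows essentially the same route as the paper: induction on the degree $n$, with the base case $P_0\equiv 1$ giving $x!$, and the inductive step expanding the double product, regrouping by base so that each $k$ acquires the exponent $\sum_{j=1}^{x-k+1}P_{n-1}(j)$, and then invoking the recurrence (\ref{eq:main}) to collapse that sum to $P_n(x-k+1)$. The only cosmetic difference is that you perform the regrouping as a formal interchange of the double product with the substitution $j=m-k+1$, whereas the paper writes out the terms and rearranges them explicitly.
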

\begin{proof}
Theorem 3.2. is proven by using the Principle of Mathematical Induction.
\vspace{5mm}

\textbf{Base Case: $n=0$} \begin{equation*}
\begin{split}
sf^{(0)}(x) & = x!= \prod^x_{k=1}k^1 \hspace{5mm} [\text{As noted in section 2.1}]\\
& = \prod^x_{k=1}k^{P_0((x-k)+1)} \hspace{5mm} [\text{From equation (2.3), }P_0(n)=1] \\
& = 1^12^1\cdot \cdot \cdot(x-1)^1x^1= x^1(x-1)^1 \cdot \cdot \cdot 2^11^1 \\
& = x^{P_0(1)}(x-1)^{P_0(2)}(x-2)^{P_0(3)} \cdot \cdot \cdot 2^{P_0(x-1)}1^{P_0(x)} \\
& = \prod^x_{k=1}[(x-k)+1]^{P_0(k)}
\end{split}
\end{equation*}
\vspace{2mm}

\textbf{Inductive Hypothesis: }
\begin{equation*}
sf^{(m)}(x)=\prod^x_{k=1}[(x-k)+1]^{P_m(k)}
\end{equation*}
\vspace{1mm}

\textbf{To prove:} \begin{equation*}
sf^{(m+1)}(x)=\prod^x_{k=1}[(x-k)+1]^{P_{m+1}(k)}=\prod^x_{k=1}k^{P_{m+1}((x-k)+1)}
\end{equation*}
Now, 
\begin{equation*}
\begin{split}
sf^{(m+1)}(x) & =\prod^x_{k=1}sf^{(m)}(k) \hspace{5mm} [\text{From equation (3.1)}] \\
& = \prod^x_{k=1}\left[\prod^k_{i=1}[(k-i)+1]^{P_m(i)}\right] \hspace{5mm} [\text{From Inductive Hypothesis}] \\
& = \prod^1_{i=1}[(1-i)+1]^{P_m(i)}\prod^2_{i=1}[(2-i)+1]^{P_m(i)} \cdot \cdot \cdot \prod^x_{i=1}[(x-i)+1]^{P_m(i)} \\
&  = \left[1^{P_m(1)}\right]\left[2^{P_m(1)}1^{P_m(2)}\right]\left[3^{P_m(1)}2^{P_m(2)}1^{P_m(3)}\right] \cdot\cdot\cdot \left[x^{P_m(1)}(x-1)^{P_m(2)}(x-2)^{P_m(3)}\cdot\cdot\cdot 1^{P_m(x)}\right] \\
& = 1^{\left[\mathlarger{\mathlarger{\sum^x_{k=1}P_m(k)}}\right]}2^{\left[\mathlarger{\mathlarger{\sum^{x-1}_{k=1}P_m(k)}}\right]} \cdot \cdot \cdot x^{\left[\mathlarger{\mathlarger{\sum^1_{k=1}P_m(k)}}\right]} \hspace{5mm} [\text{On rearranging terms}] \\ 
& = \left[1^{P_{m+1}(x)}\right]\left[2^{P_{m+1}(x-1)}\right] \cdot \cdot \cdot x^{P_{m+1}(1)} \hspace{5mm} [\text{From equation \ref{eq:main}}] \\ 
& = \prod^x_{k=1}k^{P_{m+1}((x-k)+1)} = \prod^x_{k=1}[(x-k)+1]^{P_{m+1}(k)} 
\end{split}
\end{equation*}
\end{proof}
\specialsection{\textbf{The Generalized Hyperfactorial Function - GHF}}
\begin{definition}
Define the $n^{th}$-degree hyperfactorial as follows: \begin{equation}
H^{(n)}(x)=\prod^x_{k=1}H^{(n-1)}(k) \hspace{2mm}; \hspace{2mm} H^{(1)}(x)\equiv H(x)=\prod^x_{k=1}k^k
\end{equation}
\end{definition}
We now give an explicit formula for the GHF.
\begin{theorem}
The generalized hyperfactorial function is related to the r-simplex numbers as follows:
\begin{equation*}
H^{(n)}(x)=\prod^x_{k=1}k^{k[P_{n-1}((x-k)+1)]}; \hspace{2mm} n \in \mathbb{Z}^+
\end{equation*}
\end{theorem}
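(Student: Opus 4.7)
The plan is to induct on $n$, in direct parallel with the proof of Theorem 3.2, since the same figurate-number telescoping (equation (\ref{eq:main})) is the combinatorial engine here. The only genuine difference from the GSF argument is that every factor now carries an extra $k$ in its exponent (because $H^{(1)}(k)=\prod k^k$ rather than $\prod k$), so the extra $k$ rides along on each base without interacting with (\ref{eq:main}).

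For the base case $n=1$, I would unfold the definition $H^{(1)}(x)=\prod_{k=1}^{x}k^{k}$ and compare it with the claimed formula, observing (as already used in the proof of Theorem 3.2) that $P_{0}(m)=1$ for every positive integer $m$; hence $\prod_{k=1}^{x}k^{\,k\,P_{0}((x-k)+1)}=\prod_{k=1}^{x}k^{k}$.

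For the inductive step, assume the formula holds for degree $m$. Starting from the recursion,
\begin{equation*}
H^{(m+1)}(x)=\prod_{k=1}^{x}H^{(m)}(k)=\prod_{k=1}^{x}\prod_{i=1}^{k}i^{\,i\,P_{m-1}((k-i)+1)},
\end{equation*}
I would swap the order of the double product to collect the contributions at each fixed base $i\in\{1,\dots,x\}$. The resulting exponent on $i$ is
\begin{equation*}
i\sum_{k=i}^{x}P_{m-1}\!\bigl((k-i)+1\bigr)=i\sum_{j=1}^{x-i+1}P_{m-1}(j)=i\,P_{m}(x-i+1),
\end{equation*}
where the last equality is exactly (\ref{eq:main}). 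Relabeling $i$ as $k$ then gives $\prod_{k=1}^{x}k^{\,k\,P_{m}((x-k)+1)}$, matching the target.

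The main obstacle is purely bookkeeping: correctly reordering the nested products so that each base $i$ appears with the right range of $k$-indices, and verifying that the substitution $j=(k-i)+1$ produces the exact summation limit $x-i+1$ needed to invoke (\ref{eq:main}). There is no new combinatorial idea beyond what Theorem 3.2 already required; once the reindexing is handled carefully, the argument reduces to the same figurate-number collapse used there.
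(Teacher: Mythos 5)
Your proposal is correct and follows essentially the same route as the paper's own proof: induction on the degree, the base case via $P_0\equiv 1$, and the inductive step by reordering the double product so that the exponent of each base $i$ becomes $i\sum_{j=1}^{x-i+1}P_{m-1}(j)=i\,P_m(x-i+1)$ by equation (\ref{eq:main}). The reindexing $j=(k-i)+1$ is handled correctly, so there is nothing to add.
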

\begin{proof}
Theorem 4.2. is proven by using the Principle of Mathematical Induction.
\vspace{5mm}

\textbf{Base Case: $n=1$} \begin{equation*}
\begin{split}
H^{(1)}(x) & = \prod^x_{k=1}k^k = \prod^x_{k=1}k^{k[P_{0}((x-k)+1)]} \hspace{5mm} [\text{Since}\hspace{2mm} P_0(\xi)=1, \forall\hspace{1mm} \xi \in N ] \\
\end{split}
\end{equation*}
\vspace{5mm}

\textbf{Inductive Hypothesis: }
\begin{equation*}
H^{(m)}(x)=\prod^x_{k=1}k^{k[P_{m-1}((x-k)+1)]}
\end{equation*}
\vspace{1mm}

\textbf{To prove:} \begin{equation*}
H^{(m+1)}(x)=\prod^x_{k=1}k^{k[P_{m}((x-k)+1)]}
\end{equation*}
\begin{equation*}
\begin{split}
H^{(m+1)}(x) & =\prod^x_{k=1}H^{(m)}(k) \hspace{5mm} [\text{From equation (4.1)}] \\
& = \prod^x_{k=1}\left[\prod^k_{i=1}i^{i[P_{m-1}((k-i)+1)]} \right] \hspace{5mm} [\text{From Inductive Hypothesis}] \\
& = \prod^1_{i=1}i^{i[P_{m-1}((1-i)+1)]}\prod^2_{i=1}i^{i[P_{m-1}((2-i)+1)]} \cdot \cdot \cdot \prod^x_{i=1}i^{i[P_{m-1}((x-i)+1)]}   \\
& = \left[1^{1 \cdot P_{m-1}(1)}1^{1 \cdot P_{m-1}(2)} \cdot \cdot \cdot 1^{1 \cdot P_{m-1}(x)}\right] \left[2^{2 \cdot P_{m-1}(1)}2^{2 \cdot P_{m-1}(2)} \cdot \cdot \cdot 2^{2 \cdot P_{m-1}(x-1)}\right] \\
& \hspace{5mm} \cdot \cdot \cdot \left[ x^{x \cdot P_{m-1}(1)}\right] \hspace{5mm} [\text{On rearranging terms}] \\
& = 1^{1\cdot\left[\mathlarger{\mathlarger{\sum^x_{k=1}P_{m-1}(k)}}\right]}2^{2\cdot\left[\mathlarger{\mathlarger{\sum^{x-1}_{k=1}P_{m-1}(k)}}\right]} \cdot \cdot \cdot x^{x\cdot\left[\mathlarger{\mathlarger{\sum^1_{k=1}P_{m-1}(k)}}\right]} \\
& = 1^{1 \cdot P_m(x)}2^{2 \cdot P_m(x-1)} \cdot \cdot \cdot x^{x \cdot P_m(1)} \hspace{5mm} [\text{From equation \ref{eq:main}}] \\ 
& = \prod^x_{k=1}k^{k[P_m((x-k)+1)]}
\end{split}
\end{equation*}
\end{proof}
\specialsection{\textbf{Visual Exploration into Number Patterns}}
Intuition behind the formulae of the superfactorial and hyperfactorial can be developed on observation of the number pattern they manifest themselves into (note: \textit{product of all numbers in a pattern is the output of the function}). The below list will include some illustrations which will attempt to display the nature of the functions :
\begin{enumerate}
\item \begin{equation*}
\begin{split}
sf^{1}(5) = &1*2*3*4*5 \\
&1*2*3*4 \\
&1*2*3 \\
&1*2 \\
&1 \\
\end{split}
\end{equation*}
\item \begin{equation*}
\begin{split}
H^{1}(5) = &5*4*3*2*1 \\
&5*4*3*2 \\
&5*4*3 \\
&5*4 \\
&5 \\
\end{split}
\end{equation*}
\item \begin{equation*}
\begin{split}
sf^{2}(5) = &1*2*3*4*5 \text{\hspace{3mm}} 1*2*3*4 \text{\hspace{3mm}} 1*2*3 \text{\hspace{3mm}} 1*2 \text{\hspace{3mm}}1 \\
&1*2*3*4 \text{\hspace{8mm}} 1*2*3 \text{\hspace{8mm}} 1*2 \text{\hspace{8mm}} 1 \\
&1*2*3 \text{\hspace{13mm}} 1*2 \text{\hspace{13mm}} 1 \\
&1*2 \text{\hspace{18mm}} 1 \\
&1 \\
\end{split}
\end{equation*}
\item \begin{equation*}
\begin{split}
H^{2}(5) = &5*4*3*2*1 \text{\hspace{3mm}} 4*3*2*1 \text{\hspace{3mm}} 3*2*1 \text{\hspace{3mm}} 2*1 \text{\hspace{3mm}}1 \\
&5*4*3*2 \text{\hspace{8mm}} 4*3*2 \text{\hspace{8mm}} 3*2 \text{\hspace{8mm}} 2 \\
&5*4*3 \text{\hspace{13mm}} 4*3 \text{\hspace{13mm}} 3 \\
&5*4 \text{\hspace{18mm}} 4 \\
&5 \\
\end{split}
\end{equation*}
\item \begin{equation*}
\begin{split}
M(8)= &1*2*3*4*5*6*7*8 \\
&1*2*3\hspace{13.5mm}6*7*8\\
&1*2\hspace{23.6mm}7*8\\
&1\hspace{33.7mm}8\\
\end{split}
\end{equation*}
\item \begin{equation*}
\begin{split}
M(9)= &1*2*3*4*5*6*7*8*9 \\
&1*2*3*4\hspace{8.5mm}6*7*8*9\\
&1*2*3\hspace{18.6mm}7*8*9\\
&1*2\hspace{28.7mm}8*9\\
&1\hspace{38.8mm}9
\end{split}
\end{equation*}
\item \begin{equation*}
\begin{split}
N(7)= &1*2*3*4*5*6*7 \\
 &\hspace{5mm} 2*3*4*5*6 \\
&\hspace{10mm}3*4*5 \\
&\hspace{15mm}4
\end{split}    
\end{equation*}
\item \begin{equation*}
\begin{split}
N(6)= &1*2*3*4*5*6 \\
&\hspace{5mm}2*3*4*5 \\
&\hspace{10mm}3*4
\end{split}    
\end{equation*}
\end{enumerate}
\vspace{5mm}
Item numbers 1 to 4 offers a visual insight into the pattern generated by the superfactorial and hyperfactorial (Degrees 1 and 2) functions.
No.5 and No.6 portray a pattern not yet discussed in this paper. It (the pattern) is generated by the function M(x). On observation of values of M(1) to M(10), it will become apparent that an explicit formula certainly exists which is associated with a certain combination of the factorial, floor and ceiling functions. Indeed, the explicit formula happens to be:
\begin{equation*}
\begin{split}
M(x)= x! \left[\prod^{\lceil \frac{x}{2} \rceil-1}_{k=1}\left(\left(\frac{x!}{\left(\lfloor \frac{x}{2} \rfloor+k\right)!}\right)\left(\lceil \frac{x}{2} \rceil-k\right)!\right) \right]
\end{split}
\end{equation*}
Item numbers 7 and 8 is yet another pattern which grows (as in, the pattern extends) in a nigh complimentary fashion to the aforementioned number pattern created by the function M(x). This function's explicit formula happens to be:
\begin{equation*}
\begin{split}
N(x)=\prod^{\lceil \frac{x}{2} \rceil-1}_{k=0}\frac{(x-k)!}{k!}
\end{split}
\end{equation*}
\begin{center}
\line(1,0){360}
\end{center}
\specialsection{\textbf{The Generalized Primorial Function}}
\begin{definition}
The primorial of $x$, $\mathfrak{p}(x)$, is defined as the product of the first x prime numbers [with $P_{k}$ referring to the $k^{th}$ prime]:
\begin{equation*}
    \begin{split}
        \mathfrak{p}(x)=\prod^{x}_{k=1}\left(P_{k}\right)
    \end{split}
\end{equation*}
\end{definition}
\vspace{15mm}

Before the introduction of the generalized primorial function, it is necessary to mention two other functions related to the prime factors of numbers: $\omega(x)$ evaluating the number of distinct prime factors of n, and $\Omega(x)$ evaluating the total number of prime factors of n. Note that the following result (Result 6.1) holds true \textit{because of the fact that all factors of a number's primorial - by definition, the product of n distinct primes - are prime and distinct}.
\begin{equation}
\omega(\mathfrak{p}(x))=\Omega(\mathfrak{p}(x))=x 
\end{equation}
We now introduce the generalized (a.k.a. $n^{th}$-degree) primorial function.

\begin{definition}
Define the $n^{th}$-degree primorial as follows: \begin{equation}
\mathfrak{p}^{(n)}(x)=\prod^{x}_{k=1}\mathfrak{p}^{(n-1)}(P_k) \hspace{2mm}; \hspace{2mm} \mathfrak{p}^{(0)}(P_k)=P_k
\end{equation}
\end{definition}

Note that $\mathfrak{p}^{(1)}(x)$ is, in fact, the original primorial function $\mathfrak{p}(x)$. Now, based on the recently defined function, we introduce two more generalized functions.
\begin{definition}
Define the total number of distinct prime factors of the $n^{th}$ degree primorial of a natural number x as follows:
$$\alpha^{(n)}_{\mathfrak{p}}(x)=\omega(\mathfrak{p}^{(n)}(x))$$
\end{definition}
\begin{definition}
Define the total number of prime divisors of the $n^{th}$ degree primorial of a natural number x as follows:
$$\beta^{(n)}_{\mathfrak{p}}(x)=\Omega(\mathfrak{p}^{(n)}(x))$$
\end{definition}

We now take a look at a theorem involving the $\alpha^{(n)}_{\mathfrak{p}}(x)$ function, preceded by a required lemma and its proof.
\begin{center}
\line(1,0){360}
\end{center}
\begin{lemma}
$\alpha^{(n+1)}_{\mathfrak{p}}(x) = \alpha^{(n)}_{\mathfrak{p}}(P_x)$
\end{lemma}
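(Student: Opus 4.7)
The plan is to unfold both $\mathfrak{p}^{(n+1)}(x)$ and $\mathfrak{p}^{(n)}(P_x)$ one extra step using the recursion (6.2), and then observe that the two resulting expressions are products over exactly the same collection of building-block factors $\mathfrak{p}^{(n-1)}(P_j)$, at which point applying $\omega$ to both sides gives the claim. The argument therefore reduces the lemma to a bookkeeping check on the index set of a double product; the key structural observation is the monotonicity $k \leq x \Rightarrow P_k \leq P_x$.

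Concretely, first expand
\[
\mathfrak{p}^{(n+1)}(x) \;=\; \prod_{k=1}^{x}\mathfrak{p}^{(n)}(P_k) \;=\; \prod_{k=1}^{x}\prod_{j=1}^{P_k}\mathfrak{p}^{(n-1)}(P_j),
\]
which is valid for $n \geq 1$. The step I would then pin down is that, as $(k,j)$ ranges over $\{1 \leq k \leq x,\; 1 \leq j \leq P_k\}$, the second coordinate $j$ takes precisely the values $\{1,2,\ldots,P_x\}$: no larger value of $j$ can appear (since $P_k \leq P_x$ for $k \leq x$), and every $j \leq P_x$ is already attained at $k = x$. Consequently the set of prime divisors of $\mathfrak{p}^{(n+1)}(x)$ equals
\[
\bigcup_{k=1}^{x}\bigcup_{j=1}^{P_k}\bigl\{\text{primes of }\mathfrak{p}^{(n-1)}(P_j)\bigr\} \;=\; \bigcup_{j=1}^{P_x}\bigl\{\text{primes of }\mathfrak{p}^{(n-1)}(P_j)\bigr\},
\]
which is exactly the set of prime divisors of $\prod_{j=1}^{P_x}\mathfrak{p}^{(n-1)}(P_j) = \mathfrak{p}^{(n)}(P_x)$.

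Taking $\omega$ of both sides and invoking Definition~6.3 then gives $\alpha^{(n+1)}_{\mathfrak{p}}(x) = \omega(\mathfrak{p}^{(n)}(P_x)) = \alpha^{(n)}_{\mathfrak{p}}(P_x)$. I do not expect any serious obstacle: the only delicate point is justifying the index-set equality $\{j : 1 \leq j \leq P_k \text{ for some } k \leq x\} = \{1,\ldots,P_x\}$, and this is immediate from the strict monotonicity of $k \mapsto P_k$. It is worth flagging that the argument implicitly requires $n \geq 1$ (so that the inner unfolding of $\mathfrak{p}^{(n)}(P_k)$ is available); this matches the intended scope of the lemma, since at $n=0$ one would have $\alpha^{(0)}_{\mathfrak{p}}(P_x)=\omega(P_x)=1$ while $\alpha^{(1)}_{\mathfrak{p}}(x)=x$ by Result~6.1, so the identity genuinely needs $n \geq 1$.
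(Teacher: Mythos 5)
Your proof is correct and follows essentially the same route as the paper's: both unfold the recursion one extra level and observe that every distinct prime dividing $\mathfrak{p}^{(n+1)}(x)=\prod_{k=1}^{x}\mathfrak{p}^{(n)}(P_k)$ already divides the single factor $\mathfrak{p}^{(n)}(P_x)$, so $\omega$ of the product collapses to $\omega(\mathfrak{p}^{(n)}(P_x))$ --- the paper phrases this as repeated divisibility $\mathfrak{p}^{(n)}(P_{x-1})\mid\mathfrak{p}^{(n)}(P_x)$ while you phrase it as an index-set identity for the double product, which is only a cosmetic difference. Your explicit flagging of the $n\geq 1$ restriction (and the counterexample at $n=0$) is a point the paper leaves implicit.
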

\begin{proof}
\begin{equation*}
\begin{split}
\alpha^{(n+1)}_{\mathfrak{p}}(x) & = \omega(\mathfrak{p}^{(n+1)}(x))\\
    & = \omega\left(\prod^x_{i=1}\mathfrak{p}^{(n)}(P_i)\right)\\
    & = \omega\left(\left(\mathfrak{p}^{(n)}(P_1)\right)\left(\mathfrak{p}^{(n)}(P_2)\right) \cdot\cdot\cdot \left(\mathfrak{p}^{(n)}(P_{x-1})\right)\left(\mathfrak{p}^{(n)}(P_x)\right)\right)
\end{split}
\end{equation*}
Now, observe that:
\begin{equation*}
    \begin{split}
    \mathfrak{p}^{(n)}(P_x) & = \prod^{P_x}_{i=1}\mathfrak{p}^{(n-1)}(P_i)\\
    & = \left(\mathfrak{p}^{(n-1)}(P_1)\right)\left(\mathfrak{p}^{(n-1)}(P_2)\right) \cdot\cdot\cdot \mathlarger{\left(\mathfrak{p}^{(n-1)}(P_{(P_{x-1}-1)})\right)}\\
    & \mathlarger{\left(\mathfrak{p}^{(n-1)}(P_{(P_{x-1})})\right) \left(\mathfrak{p}^{(n-1)}(P_{(P_{x-1}+1)})\right)} \cdot\cdot\cdot \left(\mathfrak{p}^{(n-1)}(P_{(P_{x}-1)})\right)\left(\mathfrak{p}^{(n-1)}(P_{P_x})\right)\\
    & = \prod^{P_{x-1}}_{i=1}\mathfrak{p}^{(n-1)}(P_i) \left(\mathfrak{p}^{(n-1)}(P_{(P_{x-1}+1)})\right) \cdot\cdot\cdot \left(\mathfrak{p}^{(n-1)}(P_{(P_{x}-1)})\right)\left(\mathfrak{p}^{(n-1)}(P_{P_x})\right)\\
    & = \prod^{P_{x-1}}_{i=1}\mathfrak{p}^{(n-1)}(P_i) \mathlarger{\prod^{P_x}_{i=P_{(x-1)}+1}\mathfrak{p}^{(n-1)}(P_i)}\\
    & = \mathfrak{p}^{(n)}(P_{x-1}) \mathlarger{\prod^{P_x}_{i=P_{(x-1)}+1}\mathfrak{p}^{(n-1)}(P_i)} \hspace{5mm} \text{By definition (6.2)}\\
    \end{split}
\end{equation*}
The above factorization lets us know that $\left(\mathfrak{p}^{(n)}(P_{x-1})\right)$ is a factor of $\left(\mathfrak{p}^{(n)}(P_x)\right)$. On repeated usage of the aforementioned observation, it becomes clear that  all the terms that are multiplied with each other, in the argument of the $\omega(x)$ function, are factors of $\left(\mathfrak{p}^{(n)}(P_x)\right)$. Since each term is a product of exclusively prime numbers (and as mentioned before, a factor of $\left(\mathfrak{p}^{(n)}(P_x)\right)$),$\left(\mathfrak{p}^{(n)}(P_x)\right)$ is bound to encompass all the \textit{distinct} prime factors of the $\omega(x)$ function's argument. Ultimately, since $\omega(x)$ function evaluates only the total number of \textit{distinct} prime factors, $\vspace{3mm}$
\begin{equation}
    \omega\left(\left(\mathfrak{p}^{(n)}(P_1)\right)\left(\mathfrak{p}^{(n)}(P_2)\right) \cdot\cdot\cdot \left(\mathfrak{p}^{(n)}(P_x)\right)\right)=\omega(\mathfrak{p}^{(n)}(P_x))=\alpha^{(n)}_{\mathfrak{p}}(P_x)
\end{equation}
\end{proof}
\begin{center}
\line(1,0){360}
\end{center}
\vspace{2mm}
\begin{theorem}
$$\alpha^{(n)}_{\mathfrak{p}}(x)=\mathlarger{\mathlarger{P_{P_{P_{P_{\cdot_{\cdot_{P_x}}}}}}}}=
\mathlarger{\mathlarger{{P_{\alpha^{(n-1)}_{\mathfrak{p}}(x)}}}} \hspace{2mm};\hspace{2mm} n\geq2$$
\end{theorem}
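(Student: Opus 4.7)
The plan is to prove the theorem by induction on $n$, with Lemma 6.1 doing essentially all of the work and Result 6.1 providing the base case. The two expressions in the theorem are equivalent: the iterated-prime form $P_{P_{\cdots P_x}}$ (with $P$ applied $n-1$ times) will be obtained by unrolling the recursion $\alpha^{(n)}_{\mathfrak{p}}(x) = P_{\alpha^{(n-1)}_{\mathfrak{p}}(x)}$, so I will focus on establishing the recursion.

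First I would settle the base case $n=2$. Here I need $\alpha^{(1)}_{\mathfrak{p}}(y)=y$ for every positive integer $y$, which is immediate from Result 6.1 since $\mathfrak{p}^{(1)}=\mathfrak{p}$ and $\omega(\mathfrak{p}(y))=y$. Applying Lemma 6.1 with $n=1$ then gives
\begin{equation*}
\alpha^{(2)}_{\mathfrak{p}}(x) \;=\; \alpha^{(1)}_{\mathfrak{p}}(P_x) \;=\; P_x \;=\; P_{\alpha^{(1)}_{\mathfrak{p}}(x)},
\end{equation*}
confirming both halves of the claimed identity when $n=2$.

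For the inductive step, suppose that for some $n \geq 2$ we already have $\alpha^{(n)}_{\mathfrak{p}}(y) = P_{\alpha^{(n-1)}_{\mathfrak{p}}(y)}$ for every positive integer $y$. I would apply Lemma 6.1 twice: once to push the degree up, and once to pull it back down on the inner argument. Explicitly, Lemma 6.1 gives $\alpha^{(n+1)}_{\mathfrak{p}}(x) = \alpha^{(n)}_{\mathfrak{p}}(P_x)$; the inductive hypothesis applied at $y=P_x$ then yields $\alpha^{(n)}_{\mathfrak{p}}(P_x) = P_{\alpha^{(n-1)}_{\mathfrak{p}}(P_x)}$; and one more use of Lemma 6.1 (in the form $\alpha^{(n-1)}_{\mathfrak{p}}(P_x) = \alpha^{(n)}_{\mathfrak{p}}(x)$) collapses the inner expression to give $\alpha^{(n+1)}_{\mathfrak{p}}(x) = P_{\alpha^{(n)}_{\mathfrak{p}}(x)}$, closing the induction.

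The tower-of-primes form then follows by iterating Lemma 6.1 a total of $n-1$ times from the top:
\begin{equation*}
\alpha^{(n)}_{\mathfrak{p}}(x) = \alpha^{(n-1)}_{\mathfrak{p}}(P_x) = \alpha^{(n-2)}_{\mathfrak{p}}(P_{P_x}) = \cdots = \alpha^{(1)}_{\mathfrak{p}}\bigl(P_{P_{\cdots P_x}}\bigr),
\end{equation*}
where the inner argument has $n-1$ nested $P$'s, and the outermost $\alpha^{(1)}_{\mathfrak{p}}$ is the identity by Result 6.1. I do not expect any real obstacle here: the only points requiring care are (i) correctly identifying $\alpha^{(1)}_{\mathfrak{p}}$ as the identity function, which is what makes the base case work and which powers the final collapse in the unrolling, and (ii) bookkeeping the depth of the prime tower so that the count of nested $P$'s matches $n-1$ rather than $n$, so that the two displayed forms of the theorem agree.
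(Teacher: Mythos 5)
Your proposal is correct and follows essentially the same route as the paper: induction on $n$ with the lemma $\alpha^{(n+1)}_{\mathfrak{p}}(x)=\alpha^{(n)}_{\mathfrak{p}}(P_x)$ applied twice in the inductive step, exactly as the paper does. The only (cosmetic) difference is the base case, where the paper re-expands $\mathfrak{p}^{(2)}(x)$ from the definitions and uses the absorption property of $\omega$ directly, whereas you obtain the same conclusion more economically by invoking the lemma at $n=1$ together with Result 6.1.
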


\begin{proof}
Theorem 6.6. is proven by using the Principle of Mathematical Induction.
\vspace{5mm}

\textbf{Base Case: $n=2$} 
To prove: $$\alpha^{(2)}_{\mathfrak{p}}(x)=\mathlarger{\mathlarger{{P_{\alpha^{(1)}_{\mathfrak{p}}(x)}}}}$$
\begin{proof}
Note that $P_{\alpha^{(1)}_{\mathfrak{p}}(x)}=P_x$ by result obtained in (6.1). 
\begin{equation}
\alpha^{(2)}_{\mathfrak{p}}(x)=\omega\left[\mathfrak{p}^{(2)}(x)\right]=\omega\left[\prod^x_{i=1}\mathfrak{p}^{(1)}(P_i)\right]=\omega\left[\prod^x_{i=1}\prod^{P_i}_{k=1}P_k\right]
\end{equation}
The above equality follows from definitions 6.1., 6.2. and 6.3.  
\begin{equation}
\omega\left[\prod^x_{i=1}\prod^{P_i}_{k=1}P_k\right]=\omega\left[\prod^{P_1}_{k=1}P_k\prod^{P_2}_{k=1}P_k \cdot \cdot \cdot\prod^{P_x}_{k=1}P_k\right]
\end{equation}
Now, note that: $\mathlarger{\prod^{P_x}_{k=1}P_k}=\left(\mathlarger{\prod^{P_{x-1}}_{k=1}P_k}\right)\left(\mathlarger{\prod^{P_x}_{k=P_{x-1}}P_k}\right)$. Because $\omega(x)$ simply counts total number of \textit{distinct} factors, the right hand side of equation (6.5) is simply equal to $\mathlarger{\omega\left(\prod^{P_x}_{k=1}P_k\right)}$. This is the same as $\omega(\mathfrak{p}^{(1)}(P_x))$ (from definition (6.1)), which is simply $P_x$ (from equation (6.1)). Therefore, 
\begin{equation}
\omega\left[\prod^{P_1}_{k=1}P_k\prod^{P_2}_{k=1}P_k \cdot \cdot \cdot\prod^{P_x}_{k=1}P_k\right]=\omega(\prod^{P_x}_{k=1}P_k)=\omega(\mathfrak{p}^{(1)}(P_x))=P_x
\end{equation}
\end{proof}
\textbf{Inductive Hypothesis: }

\begin{equation*}
\alpha^{(n)}_{\mathfrak{p}}(x)=\mathlarger{\mathlarger{{P_{\alpha^{(n-1)}_{\mathfrak{p}}(x)}}}}
\end{equation*}

\textbf{To prove:} \begin{equation*}
\alpha^{(n+1)}_{\mathfrak{p}}(x)=\mathlarger{\mathlarger{{P_{\alpha^{(n)}_{\mathfrak{p}}(x)}}}}
\end{equation*}
Now,
\begin{equation*}
    \begin{split}
    \alpha^{(n+1)}_{\mathfrak{p}}(x) & = \alpha^{(n)}_{\mathfrak{p}}(P_x) \hspace{5mm} [\text{Lemma (6.5)}]\\
    & = \mathlarger{\mathlarger{P_{\alpha^{(n-1)}_{\mathfrak{p}}(P_x)}}} \hspace{5mm} [\text{From Inductive Hypothesis}]\\
    & = \mathlarger{\mathlarger{P_{\alpha^{(n)}_{\mathfrak{p}}(x)}}} \hspace{5mm} [\text{Lemma (6.5)}]
    \end{split}
\end{equation*}
\end{proof}
\begin{center}
\line(1,0){360}
\end{center}
We now introduce a theorem involving the $\beta^{(n)}_{\mathfrak{p}}(x)$ function (total number of prime divisors of the $n^{th}$ degree primorial of a natural number x), and then give the proof for a lemma which is utilized to complete this proof:
\vspace{2mm}
\begin{theorem}
$$\beta^{(n)}_{\mathfrak{p}}(x)=\mathlarger{\sum^x_{k=1}\beta^{(n-1)}_{\mathfrak{p}}(P_k)} \hspace{2mm};\hspace{2mm} n\geq2$$
\end{theorem}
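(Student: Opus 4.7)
The plan is to prove this theorem by a direct, one-line application of the complete additivity of $\Omega$, with no need for induction on $n$. Recall that $\Omega$ counts prime factors \emph{with multiplicity}, and therefore satisfies $\Omega(ab)=\Omega(a)+\Omega(b)$ for all positive integers $a$ and $b$. This is the only nontrivial ingredient; the rest of the argument is pure definition-chasing.

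Concretely, starting from Definition 6.4, I would write $\beta^{(n)}_{\mathfrak{p}}(x)=\Omega(\mathfrak{p}^{(n)}(x))$ and then expand the argument via the recursive formula (6.2) to obtain $\Omega\bigl(\prod_{k=1}^{x}\mathfrak{p}^{(n-1)}(P_k)\bigr)$. Applying complete additivity of $\Omega$ pulls it inside the product, giving $\sum_{k=1}^{x}\Omega(\mathfrak{p}^{(n-1)}(P_k))$. Finally, re-applying Definition 6.4 to each summand rewrites this as $\sum_{k=1}^{x}\beta^{(n-1)}_{\mathfrak{p}}(P_k)$, which is exactly the claimed identity. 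That is all there is to the computation.

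The only conceptual point worth flagging is \emph{why} complete additivity is safe here while the analogous step for $\omega$ was not. In Lemma 6.5 and Theorem 6.6 we saw that the factors $\mathfrak{p}^{(n-1)}(P_k)$ for different $k$ share enormous overlap in their sets of \emph{distinct} prime divisors (in fact, each one divides the next), which is why $\omega$ collapsed the entire product to its largest-index term rather than summing. For $\Omega$, overlapping primes contribute to the count every time they reappear in a new factor, so nothing collapses and exactly the sum formula of the theorem survives. Thus the "main obstacle," such as it is, is simply being attentive to the multiplicity-versus-distinctness distinction between $\Omega$ and $\omega$; once that is recognized, the proof is essentially immediate and requires no inductive bookkeeping.
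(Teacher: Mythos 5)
Your proposal is correct and matches the paper's own argument exactly: the paper also proves this directly (no induction), expanding $\beta^{(n)}_{\mathfrak{p}}(x)=\Omega(\mathfrak{p}^{(n)}(x))$ via the recursion and invoking the complete additivity of $\Omega$, which it isolates as Lemma 6.8 and proves from canonical prime factorizations. Your remark contrasting the behaviour of $\Omega$ with that of $\omega$ is a nice observation but not something the paper needed to argue; the only thing you leave implicit that the paper spells out is the proof of complete additivity itself.
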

\begin{proof}

\begin{equation*}
    \begin{split}
    \vspace{4mm}
    \beta^{(n)}_{\mathfrak{p}}(x) & = \Omega(\mathfrak{p}^{(n)}(x))\\
    & = \Omega\left(\prod^x_{k=1}\mathfrak{p}^{(n-1)}(P_k)\right)\\
    & = \Omega\left(\left[\mathfrak{p}^{(n-1)}(P_1)\right] \left[\mathfrak{p}^{(n-1)}(P_2)\right] \cdot\cdot\cdot \left[\mathfrak{p}^{(n-1)}(P_x)\right]\right)\\
    & = \Omega(\mathfrak{p}^{(n-1)}(P_1))+\Omega(\mathfrak{p}^{(n-1)}(P_2))+\cdot\cdot\cdot\Omega(\mathfrak{p}^{(n-1)}(P_x)) \hspace{1mm};  \text{[From Lemma (6.8)]}\\
    & = \beta^{(n-1)}_{\mathfrak{p}}(P_1)+\beta^{(n-1)}_{\mathfrak{p}}(P_2)+ \cdot\cdot\cdot \beta^{(n-1)}_{\mathfrak{p}}(P_x ) \hspace{1mm} ; \text{[From Definition (6.4)]}\\
    & = \sum^x_{k=1}\beta^{(n-1)}_{\mathfrak{p}}(P_k)
    \end{split}
\end{equation*}
\end{proof}   
\begin{lemma}
\begin{equation*}
    \boxed{\Omega\left(\prod^n_{i=1} x_i\right)=\sum^n_{i=1}\Omega(x_i);\hspace{1mm} x_i\geq1, n \in \mathbb{Z}^+}
\end{equation*}
\end{lemma}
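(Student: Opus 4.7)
The plan is to prove the identity by induction on $n$, reducing it to the completely additive property $\Omega(ab)=\Omega(a)+\Omega(b)$ for any two positive integers $a$ and $b$. Once this two-term identity is in hand, the general statement follows by a routine telescoping argument.

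First I would dispose of the base case $n=1$, which is immediate since both sides collapse to $\Omega(x_1)$. For the inductive step, I would assume $\Omega\bigl(\prod_{i=1}^{k} x_i\bigr)=\sum_{i=1}^{k}\Omega(x_i)$, and then write
\begin{equation*}
\prod_{i=1}^{k+1} x_i \;=\; x_{k+1}\cdot\prod_{i=1}^{k} x_i.
\end{equation*}
Applying $\Omega(ab)=\Omega(a)+\Omega(b)$ with $a=x_{k+1}$ and $b=\prod_{i=1}^{k}x_i$, followed by the inductive hypothesis, yields $\Omega\bigl(\prod_{i=1}^{k+1}x_i\bigr)=\Omega(x_{k+1})+\sum_{i=1}^{k}\Omega(x_i)=\sum_{i=1}^{k+1}\Omega(x_i)$, completing the induction.

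The only nontrivial step — and the one that deserves care — is justifying $\Omega(ab)=\Omega(a)+\Omega(b)$. I would appeal to the Fundamental Theorem of Arithmetic: choose a finite list of primes $p_1,\ldots,p_r$ large enough to support the factorizations of both $a$ and $b$, and write $a=\prod_{j=1}^{r} p_j^{a_j}$ and $b=\prod_{j=1}^{r} p_j^{b_j}$, allowing zero exponents. Then uniqueness of prime factorization gives $ab=\prod_{j=1}^{r} p_j^{a_j+b_j}$, and since $\Omega$ by definition sums the exponents in the prime factorization, we obtain
\begin{equation*}
\Omega(ab)=\sum_{j=1}^{r}(a_j+b_j)=\sum_{j=1}^{r}a_j+\sum_{j=1}^{r}b_j=\Omega(a)+\Omega(b).
\end{equation*}
The edge case $x_i=1$ causes no trouble since $\Omega(1)=0$ (empty product), and the hypothesis $x_i\geq 1$ ensures every factor has a well-defined prime factorization.
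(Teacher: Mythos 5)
Your proof is correct. It reaches the same kernel as the paper's argument --- that $\Omega$ reads off the sum of exponents in the prime factorization, and exponents add under multiplication by unique factorization --- but it is organized differently. The paper avoids induction entirely: it writes each $x_i$ in canonical form $x_i=\prod_{j=1}^{\infty}P_j^{m_{i_j}}$ (with all but finitely many exponents zero), multiplies all $n$ factorizations at once to get $\prod_{j}P_j^{\sum_i m_{i_j}}$, and shows that both sides of the identity equal the double sum $\sum_{j}\sum_{i}m_{i_j}$ after an interchange of summation. You instead isolate the two-term identity $\Omega(ab)=\Omega(a)+\Omega(b)$ as the single nontrivial step and extend it to $n$ factors by a routine induction. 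Your route buys a cleaner separation of concerns --- the complete additivity of $\Omega$ is stated once, proved once with finite prime lists, and the rest is bookkeeping --- and it sidesteps the infinite-product formalism and the convergence remark the paper must include. The paper's route buys a single self-contained computation with no induction, at the cost of manipulating formally infinite products and a double-sum interchange. Both are sound; your handling of the edge case $x_i=1$ via $\Omega(1)=0$ is also consistent with the paper's convention of allowing all exponents to vanish.
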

\begin{proof}
This proof essentially shows that the $\Omega$ function is \textit{completely additive}. Let $$x_i=\prod^{\infty}_{j=1}P_j^{m_{i_j}}\hspace{1mm},i \in \{1,2,\cdot\cdot\cdot n\}$$ be the canonical representation\textsuperscript{\cite{Can}} of the number $x_i$ where a finite number of $m_{i_j}$ are positive integers \& the rest are 0. $P_j$ is the $j$\textsuperscript{th} prime number. Now, since the $\Omega$ function counts the total number of prime factors of a number, note that: $$\Omega(x_i)=\Omega\left(\prod^{\infty}_{j=1}P_j^{m_{i_j}}\right)=\sum^{\infty}_{j=1}m_{i_j}$$
It is to be noted that $(\sum^{\infty}_{j=1}m_{i_j})$ is a convergent series (i.e. results in a finite sum), since only a finite number of terms are positive integers (rest are 0). The RHS of Lemma 6.8 is, therefore, simply:
\begin{equation}
    \begin{split}
    \sum^n_{i=1}\Omega(x_i)& =\sum^n_{i=1}\left(\sum^{\infty}_{j=1}m_{i_j}\right)=\left(\sum^{\infty}_{j=1}m_{1_j}\right)+\left(\sum^{\infty}_{j=1}m_{2_j}\right)+\cdot\cdot\cdot\left(\sum^{\infty}_{j=1}m_{n_j}\right)\\
    & = \sum^{\infty}_{j=1}\left(m_{1_j}+m_{2_j}+\cdot\cdot\cdot m_{n_j}\right)\\ & = \boxed{ \sum^{\infty}_{j=1}\left(\sum^n_{i=1}m_{i_j}\right)}
    \end{split}
\end{equation}
Now, notice that LHS of Lemma 6.8 is simply:
\begin{equation*}
    \begin{split}
    \Omega\left(\prod^n_{i=1}x_i\right)& = \Omega\left(\prod^n_{i=1} \left(\prod^{\infty}_{j=1}P_j^{m_{i_j}}\right)\right)\\
    & = \Omega\left(\prod^{\infty}_{j=1}P_j^{m_{1_j}}\times\prod^{\infty}_{j=1}P_j^{m_{2_j}}\times\cdot\cdot\cdot\prod^{\infty}_{j=1}P_j^{m_{n_j}}\right)\\
    & = \Omega\left(\prod^{\infty}_{j=1}P_j^{m_{1_j}+m_{2_j}+\cdot\cdot\cdot m_{n_j}}\right)\\& = \Omega\left(\prod^{\infty}_{j=1}P_j^{\sum^n_{i=1}m_{i_j}}\right)\\
    & = \boxed{\sum^{\infty}_{j=1}\left(\sum^n_{i=1}m_{i_j}\right)} 
    \end{split}
\end{equation*}
\end{proof}

\specialsection{\textbf{Modular arithmetic with superfactorials}}

This section utilizes $\boxed{a \mod b = c}$ to mean that $c$ is the unique remainder when $a$ is divided by $b$, such that $0\leq c<b$.

\smallskip
We consider the remainder when the product of the first $x$ positive integers is divided by the sum of the first $x$ integers - i.e., consider $x! \text{ mod } T_x$, where $T_x$ is the $x^{\text{th}}$ triangular number. Note that $T_x=P_2(x)$.
We first introduce three lemmas.

\begin{lemma}
\begin{equation*}
d \mod ab = d \mod a + a\left(\left\lfloor\frac{d}{a} \right\rfloor \mod b\right)
\end{equation*}
\end{lemma}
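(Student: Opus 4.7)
The plan is to apply the division algorithm twice and then read off the remainder modulo $ab$. First I would write $d = qa + r$ where $q = \lfloor d/a \rfloor$ and $r = d \bmod a$, so that $0 \leq r < a$ by definition of the remainder. This handles the "inner" factor $a$ and immediately produces the term $d \bmod a$ that appears on the right-hand side.

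Next I would apply the division algorithm to $q$ itself with divisor $b$, writing $q = q'b + s$ where $s = q \bmod b = \lfloor d/a\rfloor \bmod b$ and $0 \leq s < b$. Substituting back gives the single expression
\begin{equation*}
d = q'ab + (sa + r),
\end{equation*}
which already has the shape "multiple of $ab$ plus candidate remainder." The right-hand side of the lemma is visibly $r + as$, so it remains only to certify that $r + as$ is genuinely the remainder of $d$ modulo $ab$, i.e. that it lies in the range $[0, ab)$.

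The main (and only) nontrivial step is this range check, and it is completely routine: nonnegativity is clear since $r, s, a \geq 0$, and for the upper bound I would use the tight inequalities $s \leq b-1$ and $r \leq a-1$ to obtain $sa + r \leq (b-1)a + (a-1) = ab - 1 < ab$. By uniqueness of the remainder in the division algorithm applied to $d$ and $ab$, this forces $d \bmod ab = r + as$, which is exactly the claimed identity. I do not anticipate any real obstacle; the lemma is essentially the statement that writing $d$ in mixed-radix form with "digits" of sizes $a$ and $b$ is consistent with reducing modulo $ab$.
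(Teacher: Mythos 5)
Your proposal is correct and follows essentially the same route as the paper: two applications of the division algorithm ($d = qa + r$, then $q = q'b + s$), substitution to get $d = q'ab + (sa+r)$, and the bound $sa + r \leq (b-1)a + (a-1) = ab-1$ to certify the remainder. The paper's argument is identical up to notation ($p, k, q, m$ in place of your $r, q, s, q'$), so there is nothing to add.
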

\begin{proof}
Let $d=p+ka;\hspace{2mm} p \in [0,a-1]$, and we have $p = d \mod a$, while $k=\left\lfloor\frac{d}{a}\right\rfloor$. 
Now let $k=q+mb; \hspace{2mm}q \in [0,b-1]$, and we have $q = k \mod b = \left\lfloor\frac{d}{a}\right\rfloor \mod b$. Now,
\begin{equation*}
    \begin{split}
        d & = p+ka = p+(q+mb)a\\
        & = p + aq + m(ab)\\
    \end{split}
\end{equation*}
But $p+aq \leq (a-1)+a(b-1)=ab-1$.
\begin{equation*}
    \therefore d \mod ab  = p+aq = d \mod a + a\left(\left\lfloor\frac{d}{a} \right\rfloor \mod b\right)
\end{equation*}
\end{proof}
\begin{lemma}
\begin{equation*}
    ax \mod n = ay \implies x \mod \frac{n}{gcd(a,n)} = y
\end{equation*}
\end{lemma}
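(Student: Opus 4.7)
The plan is to translate the hypothesis $ax \bmod n = ay$ into the divisibility statement $n \mid a(x-y)$ and then divide through by the greatest common divisor. Writing $g = \gcd(a,n)$, $a = g\alpha$, $n = g\nu$, one has $\gcd(\alpha,\nu) = 1$ by construction. Substituting into $n \mid a(x-y)$ gives $g\nu \mid g\alpha(x-y)$, hence $\nu \mid \alpha(x-y)$, and then coprimality of $\alpha$ and $\nu$ forces $\nu \mid (x-y)$. This is precisely $x \equiv y \pmod{n/g}$.

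To upgrade this congruence to the claimed equality of specific remainders $x \bmod (n/g) = y$, I need to verify that $y$ is already in the reduced residue range $[0, n/g - 1]$. The hypothesis tells us $ay$ is a remainder modulo $n$, so $ay < n$, which gives $y < n/a$; since $a \geq g$, we have $n/a \leq n/g$, so indeed $0 \leq y < n/g$. Hence $y \bmod (n/g) = y$, and combined with $x \equiv y \pmod{n/g}$ this yields $x \bmod (n/g) = y$, as required.

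The only step that requires any care is the final bound $y < n/g$, since without it the conclusion would have to be weakened to a congruence rather than an equality of remainders. The rest is a standard cancellation argument in modular arithmetic, closely parallel to the classical fact that one may divide both sides of a congruence by a common factor $a$ at the cost of reducing the modulus by $\gcd(a,n)$. I do not anticipate any subtlety beyond being explicit about which object lives in which residue range.
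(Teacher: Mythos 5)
Your proof is correct and takes essentially the same route as the paper's: translate the hypothesis into the divisibility $n \mid a(x-y)$ and cancel $\gcd(a,n)$. In fact your version is tighter than the paper's — you avoid its two-case split on whether $\gcd(a,n)=1$ (whose non-coprime case leans on the dubious assertion that $m \nmid n$), and you explicitly verify $0 \le y < n/\gcd(a,n)$, the range check the paper omits when promoting the congruence $x \equiv y \pmod{n/\gcd(a,n)}$ to an equality of remainders.
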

\begin{proof}
By definition, $ax \mod n = ay \implies ax=ay+kn \implies x = y +\frac{kn}{a}$. 
Now, if $a$ and $n$ are relatively prime, i.e. $gcd(a,n)=1$, $a$ necessarily divides $k$, since:
\begin{equation*}
    ax=ay+kn \implies k=\frac{a(x-y)}{n}
\end{equation*}
We therefore have $\frac{k}{a}\in \mathbb{Z}^+$, and so, with :
\begin{equation*}
    x = y +\frac{kn}{a} \implies x = y +\left(\frac{k}{a}\right)n \implies x \mod n = \boxed{x \mod \frac{n}{gcd(a,n)}= y}
\end{equation*}
If $a$ and $n$ aren't relatively prime, then let $gcd(a,n)=d; \hspace{2mm}a=md$.
Then,$$x = y +\frac{kn}{a} \implies x = y +\frac{kn}{md}$$. Because $m\nmid n$, $m$ must divide $k$. Let $\frac{k}{m}=\alpha$, then we have:
\begin{equation*}
    x = y +\alpha\frac{n}{d} \implies x \mod \frac{n}{d} = \boxed{x \mod \frac{n}{gcd(a,n)}=y} 
\end{equation*}
\end{proof}
\begin{lemma}
For all prime numbers $p$,
\begin {equation*} (p-2)! \mod p = 1 \end{equation*}
\end{lemma}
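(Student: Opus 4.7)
The plan is to deduce this as a direct corollary of Wilson's theorem, which states that for every prime $p$, $(p-1)! \equiv -1 \pmod{p}$. I would either cite Wilson's theorem outright or, for self-containedness, sketch the standard pairing argument: in the multiplicative group $(\mathbb{Z}/p\mathbb{Z})^\times$, each element $a$ has a unique inverse $a^{-1}$, and the only self-inverses are the solutions of $x^2 \equiv 1 \pmod{p}$, namely $x \equiv \pm 1 \pmod{p}$ (using that $p$ is prime, so $p \mid (x-1)(x+1)$ forces $p \mid x-1$ or $p \mid x+1$). Pairing each $a \in \{2,3,\ldots,p-2\}$ with its distinct inverse shows that this subproduct is $\equiv 1 \pmod{p}$; multiplying by the remaining factors $1$ and $p-1$ gives $(p-1)! \equiv p-1 \equiv -1 \pmod{p}$.

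Once Wilson's theorem is in hand, the lemma follows in two lines. Factoring $(p-1)! = (p-1)\cdot(p-2)!$ and substituting into Wilson's congruence yields $(p-1)(p-2)! \equiv -1 \pmod{p}$. Replacing $p-1$ by $-1$ modulo $p$ gives $-(p-2)! \equiv -1 \pmod{p}$, hence $(p-2)! \equiv 1 \pmod{p}$. Because $1 \in [0, p-1]$ for every prime $p$, this congruence upgrades to the stated equality $(p-2)! \bmod p = 1$. The edge cases $p=2$ and $p=3$ can be verified by direct computation ($0! = 1! = 1$), so the statement really does hold for \emph{all} primes without exception.

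The only real obstacle is the proof of Wilson's theorem itself --- specifically, the step that isolates $\pm 1$ as the unique self-inverses modulo $p$, which is precisely what distinguishes the prime case and which would fail for composite moduli. Everything after that is mechanical substitution. An alternative route would be to write $(p-2)! = (p-1)!/(p-1)$ and cancel the factor $p-1$ using Lemma 7.2 above (since $\gcd(p-1, p) = 1$), but this is essentially the same argument in disguise and offers no genuine simplification over the direct derivation from Wilson.
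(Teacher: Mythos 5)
Your proof is correct, but it takes a genuinely different route from the paper. You derive the lemma from Wilson's theorem, $(p-1)!\equiv -1 \pmod p$, proved via the standard pairing of each residue in $\{2,\dots,p-2\}$ with its distinct multiplicative inverse, and then cancel the factor $p-1\equiv -1$; you also correctly flag the degenerate cases $p=2,3$. The paper instead applies Sylow's third theorem to the symmetric group $S_p$: since $|S_p|=p\cdot(p-1)!$ with $p\nmid (p-1)!$, the number $n_p$ of Sylow $p$-subgroups divides $(p-1)!$; counting the $(p-1)!$ elements of order $p$ (the $p$-cycles), each Sylow subgroup contributing exactly $p-1$ of them with trivial pairwise intersections, gives $n_p(p-1)=(p-1)!$, hence $n_p=(p-2)!$, and the congruence $n_p\equiv 1\pmod p$ delivers the result directly. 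The two arguments are really the elementary and the group-theoretic proofs of the same fact: the paper's version never invokes Wilson's theorem by name and fits its announced theme of applying group theory in this section, while yours is more self-contained at the level of modular arithmetic and requires nothing beyond the field structure of $\mathbb{Z}/p\mathbb{Z}$. Your suggested alternative of cancelling $p-1$ via Lemma 7.2 is indeed equivalent and is consistent with how the paper itself uses that lemma elsewhere in Section 7.
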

\begin{proof}
Let $S_p$ be the symmetric group on $p$ elements. The order of $S_p$ then, is $|S_p|=p!=p(p-1)!$. From Sylow's third theorem, we know that the number of Sylow $p$-subgroups $n_p$ of a finite group $G$ divides $m$, where $|G|=p^nm; \hspace{2mm} n>0, p\nmid m$. Therefore for $G=S_p$, it follows that $n_p|(p-1)!$. Note that $(p-1)!$ is the total elements number of order $p$, and since each of the Sylow $p$-subgroups have exactly $p-1$ of these elements, we have:
\begin{equation}
    n_p(p-1)=(p-1)! \implies n_p = (p-2)!\label{eq:1}
\end{equation}
Sylow's third theorem also tells us that $n_p \equiv 1 \pmod p$, and this along with equation $(\ref{eq:1})$ ultimately gives us:
\begin{equation*}
    (p-2)! \equiv 1 \pmod p \implies (p-2)! \mod p = 1
\end{equation*}
\end{proof}
\begin{theorem}
\begin{equation*}
    sf^{(0)}(x)\mod P_2(x) = \boxed{x!\mod P_2(x) = x \left\lfloor \frac{x}{\left(p_{\pi(x)+1}-1\right)} \right\rfloor};\hspace{3mm} x\geq 2, x\in \mathbb{Z}^+
\end{equation*}
\end{theorem}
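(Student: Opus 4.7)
The plan is to split on whether $x+1$ is prime. A quick inspection of the claimed right-hand side shows it equals $x$ when $x+1$ is prime (then $p_{\pi(x)+1} = x+1$, so $p_{\pi(x)+1}-1 = x$ and $\lfloor x/x \rfloor = 1$) and equals $0$ when $x+1$ is composite (then $p_{\pi(x)+1} \geq x+2$, so $p_{\pi(x)+1}-1 > x$ and the floor vanishes). So the theorem reduces to two computations of $x! \bmod P_2(x)$, where $P_2(x) = x(x+1)/2$.

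In the composite case, I would show $P_2(x) \mid x!$ directly. Since $x! = x \cdot (x-1)!$ and $\gcd(x, x+1) = 1$, the requirement $x(x+1)/2 \mid x!$ is equivalent to $(x+1) \mid 2(x-1)!$. Invoking the classical fact that every composite $n > 4$ divides $(n-1)!$, applied to $n = x+1$, together with $\gcd(x, x+1) = 1$, yields $(x+1) \mid (x-1)!$, whence certainly $(x+1) \mid 2(x-1)!$. The lone exception $x+1 = 4$ (i.e.\ $x = 3$) is checked directly: $2(x-1)! = 4 = x+1$.

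For the prime case, set $p = x+1 \geq 3$ and apply Lemma 7.1 with $a = (p-1)/2$ and $b = p$, so that $ab = p(p-1)/2 = P_2(x)$. Two subcomputations drive the argument. First, $(p-1)! = 2 \cdot \tfrac{p-1}{2} \cdot (p-2)!$ shows $a \mid (p-1)!$, so $(p-1)! \bmod a = 0$ and $\lfloor (p-1)!/a \rfloor = 2(p-2)!$. Second, Lemma 7.3 gives $(p-2)! \equiv 1 \pmod{p}$, and since $p \geq 3$ we have $2(p-2)! \bmod p = 2$. Substituting into Lemma 7.1 yields $x! \bmod P_2(x) = 0 + \tfrac{p-1}{2}\cdot 2 = p - 1 = x$, matching the claim.

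The main obstacle is the composite case: the prime case is essentially a mechanical assembly of Lemmas 7.1 and 7.3, but the composite case requires either invoking the ``composite $n > 4$ divides $(n-1)!$'' folklore (with $n=4$ handled as an edge case) or giving a self-contained factorization argument that treats $x+1 = p^2$ and $x+1 = 2p$ separately from the generic case $x+1 = ab$ with $1 < a < b \leq x-1$. Lemma 7.2 is not needed for this particular theorem.
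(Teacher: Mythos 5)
Your proposal is correct and follows essentially the same route as the paper: split on whether $x+1$ is prime, reduce the composite case to the divisibility of $2(x-1)!$ by $x+1$, and in the prime case apply Lemma 7.1 with $a=x/2$, $b=x+1$ together with Lemma 7.3. The only divergences are cosmetic --- the paper handles the composite case by hand in three subcases (even, odd composite with distinct factors, odd square) instead of citing the folklore fact that composite $n>4$ divides $(n-1)!$, and it routes the final congruence through Lemma 7.2, which, as you correctly observe, is avoidable by computing $2(p-2)!\bmod p = 2$ directly.
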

\begin{proof}
$$\frac{x!}{P_2(x)}=\frac{x!}{
 \frac{x(x+1)}{2}}=\frac{2(x-1)!}{(x+1)}$$
 
Now, consider the following cases:

\begin{enumerate}
    \item $(x+1)$ is an even number.
    $$(x+1)=2\frac{(x+1)}{2}=2k;\hspace{3mm} 1< \frac{(x+1)}{2}\leq(x-1),\hspace{1mm} \forall x\in\mathbb{Z}^+\geq3$$
    This means that $k|(x-1)!, \forall x\in\mathbb{Z}^+\geq3$. Note that $x=1$ gives  $\frac{1!}{\frac{1*(1+1)}{2}}=1$.
    
    \begin{equation}\therefore (x+1)=2k \implies x! \mod P_2(x) = 0\label{eq:2}\end{equation}
    \item $(x+1)$ is an odd composite number. 
    $$(x+1)=gq; \hspace{3mm} 1<\{g,q\}<(x-1), \forall x\in\mathbb{Z}^+>3$$
    \smallskip
    If $g\neq q$, then $qg|(x-1)!$. If $q=g$, note that $(x+1)=g^2 \implies (x-1)=g^2-2$. Note that since $g^2-2>2g$, $g(2g)|(x-1)!$ and therefore $g^2|(x-1)! \implies gq|(x-1)!$.
    
    \begin{equation} 
    \therefore (x+1)=gq \implies x! \mod P_2(x) = 0 \label{eq:3}
    \end{equation}
    
    \item $(x+1)$ is a prime number.

    Notice that in this case, $(x+1)\nmid (x-1)!$, because no number from $1$ to $(x-1)$, as well as their product $(x-1)!$, has $(x+1)$ as their prime factor. Using lemma $(7.1)$, we have: 
    \begin{equation}
    \begin{split}
        x! \mod \frac{x(x+1)}{2} & = x! \mod \frac{x}{2}+\frac{x}{2}\left(\left\lfloor\frac{x!}{\frac{x}{2}}\right\rfloor \mod (x+1)\right)\\
        & = 0 + \frac{x}{2}\left(2(x-1)! \mod (x+1)\right)\\
        & = \frac{x}{2}\left(2l\right)\label{eq:4}
    \end{split}    
    \end{equation}
    Where $\boxed{2(x-1)! \mod (x+1) = 2l}$. Realize that employing lemma $(7.2)$, we get $\boxed{(x-1)! \mod (x+1) = l}$. Further, lemma $(7.3)$ tells us that $l=1$. So from equation $(\ref{eq:4})$, we finally have:
    \begin{equation}
        x! \mod \frac{x(x+1)}{2} = \frac{x}{2}(2l) = x\label{eq:5}
    \end{equation}
\end{enumerate}
We therefore have, from equations $(\ref{eq:2})$,$(\ref{eq:3})$ and $(\ref{eq:5})$:
\begin{equation}
x! \mod \frac{x(x+1)}{2} = \left\{
        \begin{array}{ll}
            0 & \quad x+1 \text{ is composite}  \\
            x & \quad x+1 \text{ is prime}
        \end{array}
    \right. \label{eq:6}
\end{equation}
This tells us that the LHS of $(7.5)$ evaluates to $x$ only if $x+1$ is prime. 
\smallskip

Now, the largest prime number less than or equal to $x$ is simply $p_{\pi(x)}$. So the smallest prime number greater than $x$ is $p_{\pi(x)+1}$. So we can say, that the LHS of equation $(\ref{eq:6})$ evaluates to $x$, only if $x+1=p_{\pi(x)+1} \implies x=p_{\pi(x)+1}-1$. 
\smallskip

Observe that the maximum value attainable by $f(x)=\frac{x}{p_{\pi(x)+1}-1}$ is 1, at $x=p_{\pi(x)+1}-1$. For all other values of x, $f(x)\in(0,1)$. Therefore, taking the floor function of $f(x)$ helps us get different discrete values, and the following holds:
\begin{equation}
    \left\lfloor\frac{x}{p_{\pi(x)+1}-1} \right\rfloor  = \left\{
        \begin{array}{ll}
            0 & \quad x+1 \neq p_{\pi(x)+1} \\
            1 & \quad x+1=p_{\pi(x)+1}
        \end{array}
    \right.
\end{equation}
Multiplying the above equation by $x$, and realizing that this process results in its RHS being the same as that of equation $(\ref{eq:6})$ , we finally get:
$$\boxed{x!\mod P_2(x) = x \left\lfloor \frac{x}{\left(p_{\pi(x)+1}-1\right)} \right\rfloor};\hspace{3mm} x\geq 2, x\in \mathbb{Z}^+$$
\end{proof}
\begin{theorem}
$\forall x>n+1, x\in\mathbb{Z}^+ $,
\begin{equation*}
    \boxed{sf^{(n)}(x) \mod P_{n+2}(x) = 0 \iff \sum^{n+1}_{i=1}\left\lfloor \frac{x}{\left(p_{\pi(x)+1}-i\right)} \right\rfloor \left\lfloor \frac{\left(p_{\pi(x)+1}-i\right)}{x} \right\rfloor=0}
\end{equation*}
\end{theorem}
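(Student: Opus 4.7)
The plan is to recognize the right-hand sum as an indicator for the primality of the interval $(x,x+n+1]$ and then settle the equivalence prime-by-prime. Let $p := p_{\pi(x)+1}$ denote the smallest prime exceeding $x$. For each $i\in\{1,\ldots,n+1\}$, the summand $\lfloor x/(p-i)\rfloor\lfloor(p-i)/x\rfloor$ is the product of two nonnegative floors, both of which attain values at least $1$ exactly when $p-i\leq x$ and $p-i\geq x$ hold simultaneously, that is, when $p=x+i$. Since $p>x$, at most the single index $i=p-x$ can contribute a nonzero summand, and it does so precisely when $p\leq x+n+1$. Hence the whole sum vanishes iff no prime lies in $(x,x+n+1]$, reducing the theorem to the equivalent assertion
\begin{equation*}
    P_{n+2}(x)\mid sf^{(n)}(x)\iff(x,x+n+1]\text{ is prime-free.}
\end{equation*}

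For the direction asserting that a prime in the interval forces non-divisibility, suppose some prime $q\in(x,x+n+1]$ exists. Writing $P_{n+2}(x)=\binom{x+n+1}{n+2}=\frac{x(x+1)\cdots(x+n+1)}{(n+2)!}$, the prime $q$ appears in the numerator as one of the consecutive factors, and since $q>x\geq n+2$ we have $q\nmid(n+2)!$, so $q\mid P_{n+2}(x)$. On the other hand, Theorem 3.2 gives $sf^{(n)}(x)=\prod_{k=1}^{x}k^{P_n(x-k+1)}$, whose prime factors are all at most $x$; in particular $q\nmid sf^{(n)}(x)$, so $P_{n+2}(x)\nmid sf^{(n)}(x)$.

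For the reverse direction, assume $(x,x+n+1]$ is prime-free. The same factor analysis now shows that every prime divisor of $P_{n+2}(x)$ is at most $x$, so it suffices to verify $v_p(P_{n+2}(x))\leq v_p(sf^{(n)}(x))$ for each prime $p\leq x$. Combining Theorem 3.2 with the trivial bound $P_n(j)=\binom{n+j-1}{n}\geq 1$ for $j\geq 1$ produces the clean lower bound
\begin{equation*}
    v_p(sf^{(n)}(x))=\sum_{k=1}^{x}v_p(k)\,P_n(x-k+1)\geq\sum_{k=1}^{x}v_p(k)=v_p(x!),
\end{equation*}
so it is enough to establish the purely arithmetic inequality $v_p\bigl(\binom{x+n+1}{n+2}\bigr)\leq v_p(x!)$ for every prime $p\leq x$.

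The main obstacle is this final valuation bound. I would attack it through Legendre's formula applied to the identity $(x+n+1)!=(x-1)!\,(n+2)!\,\binom{x+n+1}{n+2}$, which recasts the claim as the term-by-term floor comparison $\sum_{j\geq 1}\bigl(\lfloor(x+n+1)/p^j\rfloor-\lfloor(x-1)/p^j\rfloor-\lfloor(n+2)/p^j\rfloor\bigr)\leq\sum_{j\geq 1}\lfloor x/p^j\rfloor$, or equivalently via Kummer's theorem, which identifies $v_p\bigl(\binom{x+n+1}{n+2}\bigr)$ with the number of base-$p$ carries in $(n+2)+(x-1)$. The crux is to show that the carries generated by adding the small number $n+2$ to $x-1$ in base $p$ cannot outnumber the contributions harvested from the Legendre sum for $v_p(x!)$ once $p\leq x$; I expect the Legendre-type extension announced for Section 8 to supply the cleanest bookkeeping for this comparison. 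Once the valuation inequality is in place, the three steps assemble into the stated biconditional.
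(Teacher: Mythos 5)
Your reduction of the right-hand sum to the statement ``some prime lies in $(x,x+n+1]$'' is correct, and your forward direction is sound: a prime $q\in(x,x+n+1]$ satisfies $q>x\ge n+2$, hence divides the numerator of $\binom{x+n+1}{n+2}$ but not $(n+2)!$, while every prime factor of $sf^{(n)}(x)=\prod_{k=1}^{x}k^{P_n(x-k+1)}$ is at most $x$. This half is in fact argued more carefully than in the paper, which simply asserts from the displayed quotient $g(x)=sf^{(n)}(x)/P_{n+2}(x)$ that $g(x)$ fails to be an integer ``if and only if'' one of $x+1,\dots,x+n+1$ is prime.

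The gap is in the reverse direction. The ``purely arithmetic inequality'' $v_p\bigl(\binom{x+n+1}{n+2}\bigr)\le v_p(x!)$ to which you reduce the problem is false as stated: take $x=5$, $n=3$, $p=3$ (so $x>n+1$ and $p\le x$); then $P_{n+2}(x)=\binom{9}{5}=126=2\cdot 3^2\cdot 7$ has $v_3=2$, while $v_3(5!)=1$. That instance has $7\in(5,9]$, so it does not contradict the theorem, but it does break your route, because you explicitly discard the prime-free hypothesis before reaching the valuation bound and then try to prove an unconditional statement that has counterexamples. The underlying issue is that the bound $v_p(sf^{(n)}(x))\ge v_p(x!)$ obtained from $P_n(j)\ge 1$ throws away almost all of the divisibility of $sf^{(n)}(x)$: in the example above $v_3(sf^{(3)}(5))=P_3(3)=10$, comfortably above $2$, so the divisibility itself holds and only your intermediate inequality fails. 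To close the argument you would need either a sharper lower bound on $v_p(sf^{(n)}(x))$ (for instance via $P_n(x-k+1)\ge n+1$ for $k\le x-1$, or via the Legendre-type formula of Theorem 8.1) or an upper bound on $v_p(P_{n+2}(x))$ that genuinely uses the compositeness of $x+1,\dots,x+n+1$; as written, you flag exactly this step as ``the main obstacle'' and sketch two possible attacks without carrying either out, so the reverse implication remains unproven. (For completeness: the paper's own proof does not supply this argument either; it treats the equivalence as apparent from the form of $g(x)$.)
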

\begin{proof}
First, let the falling factorial $x\fallingfactorial{n}$ be defined as follows: $$x\fallingfactorial{n}={\overbrace{x(x-1)\dots(x-(n-1))}^{\text{$n$ factors}}}$$
Note that: $$P_{n+2}(x)=\binom{x+n+1}{n+2}=\frac{(x+n+1)!}{(n+2)!(x-1)!}=\frac{(x+n+1)(x+n)\cdot\cdot\cdot(x+1)x}{(n+2)!}$$
$$\therefore P_{n+2}(x)= (x+n+1)\fallingfactorial{n+1}\left[\frac{x}{(n+2)!}\right]$$
Now,
\begin{equation}
    g(x)=\frac{sf^{(n)}(x)}{P_{n+2}(x)}=\ddfrac{(n+2)!\prod^{x-1}_{k=1}k^{P_n((x-k)+1)}}{(x+n+1)\fallingfactorial{n+1}}
\end{equation}
It is apparent in the above equation that if and only if any of the terms in the denominator  $(x+n+1)\fallingfactorial{n+1}$ is a prime number, then $g(x)$ is not an integer. 
\begin{align}
    \therefore \left(g(x)\not\in\mathbb{Z}^+ \iff sf^{(n)}(x) \mod P_{n+2}(x) \neq 0\right)\iff\\ \left(g(x)\in\mathbb{Z}^+ \iff sf^{(n)}(x) \mod P_{n+2}(x) = 0\right) \label{eq:7}
\end{align}
Now consider the terms $\left\lfloor \frac{x}{\left(p_{\pi(x)+1}-i\right)} \right\rfloor$ and $\left\lfloor \frac{\left(p_{\pi(x)+1}-i\right)}{x} \right\rfloor$, and note that $\left\lfloor\ddfrac{a}{b}\right\rfloor\left\lfloor\ddfrac{b}{a}\right\rfloor \neq 0 \iff a=b$. Realize that $\pi(x)+1$ is the smallest prime greater than $x$, and that for an integer $i\in[1,n+1]$, $x=p_{\pi(x)+1}-i$ would imply that one of the terms in $(x+n+1)\fallingfactorial{n+1}$ is a prime (with the prime being $x+i$). So for none of the terms in $(x+n+1)\fallingfactorial{n+1}$ to be a prime, it would require that:

\begin{equation}
    \left\lfloor \frac{x}{\left(p_{\pi(x)+1}-i\right)} \right\rfloor\left\lfloor \frac{\left(p_{\pi(x)+1}-i\right)}{x} \right\rfloor=0, \hspace{1mm} \forall i\in\mathbb{Z}^+;1\leq i\leq n+1 \label{eq:8}
\end{equation}
So we ultimately have, $\forall x>n+1, x\in\mathbb{Z}^+ $, based on $(\ref{eq:7})$ and $(\ref{eq:8})$,

\begin{equation*}\boxed{
    sf^{(n)}(x) \mod P_{n+2}(x) = 0 \iff \sum^{n+1}_{i=1}\left\lfloor \frac{x}{\left(p_{\pi(x)+1}-i\right)} \right\rfloor \left\lfloor \frac{\left(p_{\pi(x)+1}-i\right)}{x} \right\rfloor=0}
\end{equation*}
\end{proof}

\specialsection{\textbf{On the factors of superfactorials}}

\textit{Legendre's formula} (otherwise referred to as \textit{de Polignac's formula}), gives us the number of times a prime number $p$ divides into $x!$:

\begin{equation}
    \nu_p(x!)=\sum^{\infty}_{i=1}\left\lfloor\frac{x}{p^i}\right\rfloor \label{eq:9}
\end{equation}
Where $\nu_p(x)$ is the standard notion for the \textit{p-adic} order of $x$.
Let 
\begin{equation*}
    ^{sf}\nu_p^{(n)}(x) \equiv \nu_p\left(sf^{(n)}(x)\right)
\end{equation*}
Then using this notation, we see that \textit{Legendre's formula} involves $^{sf}\nu_p^{(0)}(x)$, since $sf^{(0)}(x)=x!$. We now present a proof of a sum-expression for $^{sf}\nu_p^{(n)}(x)$.

\begin{theorem}
\begin{equation*}
    ^{sf}\nu_p^{(n)}(x) = \sum^x_{i=1} \left[ P_{n-1}(x+1-i)\left(\sum^{\infty}_{k=1}\left\lfloor\frac{i}{p^k}\right\rfloor\right)\right]; \hspace{2mm} n \in \mathbb{Z}^+
\end{equation*}
\end{theorem}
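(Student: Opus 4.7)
\bigskip

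\noindent\textbf{Proof proposal.} The plan is to proceed by induction on $n$, in the same style as Theorems 3.2 and 4.2. Two ingredients do all the work: the complete additivity of the $p$-adic valuation $\nu_p$, which is the one-prime specialization of Lemma 6.8 (read off a single exponent in the canonical factorization instead of summing over all of them), together with equation (\ref{eq:main}), which collapses $\sum_{\ell=1}^{m} P_{n-1}(\ell)$ into $P_n(m)$.

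For the base case $n=1$, the definition gives $sf^{(1)}(x)=\prod_{i=1}^{x} i!$, so additivity of $\nu_p$ together with Legendre's formula (\ref{eq:9}) yields
\begin{equation*}
{}^{sf}\nu_p^{(1)}(x) \;=\; \sum_{i=1}^{x} \nu_p(i!) \;=\; \sum_{i=1}^{x} \sum_{k=1}^{\infty} \left\lfloor\frac{i}{p^k}\right\rfloor,
\end{equation*}
which matches the claim after inserting $P_{0}(x+1-i)\equiv 1$ into each summand.

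For the inductive step, assume the identity at $n=m$. The recursion (3.1), additivity of $\nu_p$, and the inductive hypothesis combine to give
\begin{equation*}
{}^{sf}\nu_p^{(m+1)}(x) \;=\; \sum_{j=1}^{x} {}^{sf}\nu_p^{(m)}(j) \;=\; \sum_{j=1}^{x}\sum_{i=1}^{j} P_{m-1}(j+1-i)\sum_{k=1}^{\infty}\left\lfloor\frac{i}{p^k}\right\rfloor.
\end{equation*}
I would then swap the outer two sums so that $i$ runs from $1$ to $x$ and $j$ from $i$ to $x$, and reindex by $\ell = j+1-i$, to obtain
\begin{equation*}
{}^{sf}\nu_p^{(m+1)}(x) \;=\; \sum_{i=1}^{x}\left(\sum_{\ell=1}^{x+1-i} P_{m-1}(\ell)\right)\sum_{k=1}^{\infty}\left\lfloor\frac{i}{p^k}\right\rfloor \;=\; \sum_{i=1}^{x} P_m(x+1-i)\sum_{k=1}^{\infty}\left\lfloor\frac{i}{p^k}\right\rfloor,
\end{equation*}
with the last equality being precisely (\ref{eq:main}). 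This is the claimed formula at $n=m+1$, completing the induction.

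The only real obstacle is the bookkeeping for the triple sum; once the swap is performed, (\ref{eq:main}) does the rest, and there is no subtlety with the infinite Legendre series because for each fixed $i\le x$ only finitely many $k$ contribute, so all interchanges of summation are legitimate. A shorter non-inductive route is also available: use (\ref{eq:main}) to rewrite Theorem 3.2 in the factorial-based form $sf^{(n)}(x) = \prod_{i=1}^{x}(i!)^{P_{n-1}(x+1-i)}$ and apply $\nu_p$ together with Legendre's formula in a single stroke, but this simply hides the same sum swap inside the re-expression of $sf^{(n)}(x)$.
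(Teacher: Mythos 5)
Your proposal is correct and follows essentially the same route as the paper's own proof: the same base case via Legendre's formula and additivity of $\nu_p$, and the same inductive step in which the double sum is reordered and $\sum_{\ell=1}^{x+1-i}P_{m-1}(\ell)$ is collapsed to $P_m(x+1-i)$ via equation (\ref{eq:main}). The only difference is cosmetic relabelling of summation indices, plus your optional remark about a non-inductive shortcut, which the paper does not pursue.
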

\begin{proof}
\textbf{Base Case: $n=1$} \begin{equation*}
    ^{sf}\nu_p^{(1)}(x) = \nu_p\left(sf(x)\right) = \sum_{i=1}^x \nu_p(i!) = \sum_{i=1}^x P_0(x+1-i)\left(\sum^{\infty}_{k=1}\left\lfloor\frac{i}{p^k}\right\rfloor\right)
\end{equation*}
The above follows from $\ref{eq:9}$, and the fact that $\nu_p(a \cdot b)=\nu_p(a)+\nu_p(b)$ (where $a$ \& $b$ are integers).

\textbf{Inductive Hypothesis:}
\begin{equation*}
    ^{sf}\nu_p^{(m)}(x) = \sum^x_{i=1} \left[ P_{m-1}(x+1-i)\left(\sum^{\infty}_{k=1}\left\lfloor\frac{i}{p^k}\right\rfloor\right)\right]; \hspace{2mm} m \in \mathbb{Z}^+
\end{equation*}
\textbf{To Prove:}
\begin{equation*}
    ^{sf}\nu_p^{(m+1)}(x) = \sum^x_{i=1} \left[ P_{m}(x+1-i)\left(\sum^{\infty}_{k=1}\left\lfloor\frac{i}{p^k}\right\rfloor\right)\right]; \hspace{2mm} (m+1) \in \mathbb{Z}^+
\end{equation*}
Now, 
\begin{equation*}
\begin{split}
    ^{sf}\nu_p^{(m+1)}(x) & = \sum^x_{i=1}\nu_p\left(sf^{(m)}(i)\right) = \sum^x_{i=1}{}^{sf}\nu_p^{(m)}(i)\\
    & = \sum^x_{i=1}\sum^i_{j=1} \left[ P_{m-1}(i+1-j)\left(\sum^{\infty}_{k=1}\left\lfloor\frac{j}{p^k}\right\rfloor\right)\right]\\
    & = \sum^x_{j=1}\left[\left(\sum^{\infty}_{k=1}\left\lfloor\frac{j}{p^k}\right\rfloor\right)\sum^x_{i=j}  P_{m-1}(i+1-j)\right]\\
    \end{split}
\end{equation*} 
However, note that:
\begin{equation*}
\begin{split}
    \sum^x_{i=j}  P_{m-1}(i+1-j) & = P_{m-1}(j+1-j) + P_{m-1}((j+1)+1-j) \cdot \cdot \cdot \cdot\\ & \hspace{4mm}+ P_{m-1}((x-1)+1-j) + P_{m-1}(x+1-j)\\
    & = P_{m-1}(1) + P_{m-1}(2) \cdot \cdot \cdot + P_{m-1}(x+1-j) = \sum^{x+1-j}_{i=1}  P_{m-1}(i)\\
    & = P_{m}(x+1-j) \hspace{2mm} [\text{From equation \ref{eq:main}}]
\end{split}
\end{equation*}
Therefore, as required, we have:
\begin{equation*}
    ^{sf}\nu_p^{(m+1)}(x) = \sum^x_{j=1}\left[\left(\sum^{\infty}_{k=1}\left\lfloor\frac{j}{p^k}\right\rfloor\right)P_{m}(x+1-j)\right]
\end{equation*}
\end{proof}
\begin{corollary}
\begin{equation*}
    ^{sf}\nu_p^{(n)}(x) = \sum^x_{i=1} \left[ P_{n-j}(x+1-i)\nu_p^{(j-1)}(x)\right]; \hspace{2mm} \{n,j\} \in \mathbb{Z}^+, n\geq j
\end{equation*}
\end{corollary}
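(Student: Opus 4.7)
The plan is to prove the corollary by induction on $j$, with $1 \leq j \leq n$ and $n$ fixed. Reading the statement in context (comparing against Theorem 8.1 and the definitional recursion), the symbol $\nu_p^{(j-1)}(x)$ inside the sum is most naturally understood as $^{sf}\nu_p^{(j-1)}(i)$, since the outer index $i$ must enter the summand somewhere. I will carry out the induction on that reading. The base case $j=1$ is precisely Theorem 8.1, upon observing that by Legendre's formula (equation \ref{eq:9}) one has $^{sf}\nu_p^{(0)}(i)=\nu_p(i!)=\sum_{k=1}^{\infty}\lfloor i/p^k\rfloor$, so that the bracketed factor $\sum_{k=1}^{\infty}\lfloor i/p^k\rfloor$ in Theorem 8.1 is exactly $^{sf}\nu_p^{(0)}(i)$.

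For the inductive step, I assume the identity holds for some $j$ with $1\leq j<n$ and establish it for $j+1$. The key auxiliary fact is that, by the recursive definition $sf^{(j)}(i)=\prod_{m=1}^{i}sf^{(j-1)}(m)$ together with complete additivity of $\nu_p$ (Lemma 6.8),
\[
{}^{sf}\nu_p^{(j)}(i)=\sum_{m=1}^{i}{}^{sf}\nu_p^{(j-1)}(m).
\]
Substituting this into the candidate formula at level $j+1$ and interchanging the order of summation yields
\[
\sum_{i=1}^{x}P_{n-j-1}(x+1-i)\sum_{m=1}^{i}{}^{sf}\nu_p^{(j-1)}(m)=\sum_{m=1}^{x}{}^{sf}\nu_p^{(j-1)}(m)\sum_{i=m}^{x}P_{n-j-1}(x+1-i).
\]
The inner sum, after the change of variable $\ell=x+1-i$, becomes $\sum_{\ell=1}^{x+1-m}P_{n-j-1}(\ell)$, which by equation (\ref{eq:main}) collapses to $P_{n-j}(x+1-m)$. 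After relabeling $m\to i$, the right-hand side is exactly the expression from the inductive hypothesis at level $j$, which by assumption equals $^{sf}\nu_p^{(n)}(x)$. This closes the induction.

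The main (and quite modest) obstacle is simply careful bookkeeping in the double-sum swap and in the index shift that converts the $i$-sum of $P_{n-j-1}(x+1-i)$ into a prefix sum of $P_{n-j-1}$-values; the hockey-stick identity (\ref{eq:main}) is tailor-made for that collapse, so no non-routine step is required. It is also worth remarking that the endpoints $j=1$ and $j=n$ of the corollary recover, respectively, Theorem 8.1 and the trivial identity $^{sf}\nu_p^{(n)}(x)=\sum_{i=1}^{x}{}^{sf}\nu_p^{(n-1)}(i)$ (using $P_0\equiv 1$), so the corollary should be read as an interpolation between these two extremes.
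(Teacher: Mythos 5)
Your proof is correct, and it supplies an argument where the paper offers none: the paper dismisses this corollary with the single remark that it is ``merely the result of exercise in algebra,'' so there is no proof to compare against. Two things you do are worth endorsing explicitly. First, your reinterpretation of the summand $\nu_p^{(j-1)}(x)$ as ${}^{sf}\nu_p^{(j-1)}(i)$ is not optional but necessary: read literally, the factor is independent of $i$ and would pull out of the sum to give $\nu_p^{(j-1)}(x)\,P_{n-j+1}(x)$ via equation (\ref{eq:main}), which is false in general; your reading is the only one under which the $j=1$ case reduces to Theorem 8.1 and the $j=n$ case reduces to the defining recursion ${}^{sf}\nu_p^{(n)}(x)=\sum_{i=1}^{x}{}^{sf}\nu_p^{(n-1)}(i)$, so the statement as printed contains a typo that you have correctly repaired. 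Second, your induction on $j$ (base case Theorem 8.1, inductive step via ${}^{sf}\nu_p^{(j)}(i)=\sum_{m=1}^{i}{}^{sf}\nu_p^{(j-1)}(m)$, the interchange of summation, and the collapse $\sum_{\ell=1}^{x+1-m}P_{n-j-1}(\ell)=P_{n-j}(x+1-m)$ from equation (\ref{eq:main})) is exactly the same mechanism the paper uses in the inductive step of Theorem 8.1, just run along the parameter $j$ with $n$ fixed instead of along $n$; the only hypothesis you silently use, $n-j-1\geq 0$ so that the hockey-stick identity applies, is guaranteed by the corollary's restriction $n\geq j$ at level $j+1\leq n$. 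The argument is complete and routine, which is presumably what the author meant by ``exercise in algebra,'' but writing it out as you have done also exposes and fixes the misprint in the statement.
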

The above is merely the result of exercise in algebra. 
\bigskip

Another formula to express $\nu_p(x!)$ would be the following \textsuperscript{\cite{Mihet}}: $\nu_p(x!)=\frac{x-s_p(x)}{p-1}$, where $s_p(x)$ is the sum of digits of the base-$p$ expansion of $x$, i.e.
 $$s_p(x)=\sum^{\left\lfloor \log_p(x) \right\rfloor}_{n=0}\frac{1}{b^n}(x \mod b^{n+1}-x \mod b^n)$$
 Extending this, we get the following corollary centred around $^{sf}\nu_p^{(n)}(x)$, which can be proven in a fashion similar to that of Theorem $8.1$.
\begin{corollary}
\begin{equation*}\boxed{
    ^{sf}\nu_p^{(n)}(x) = \frac{1}{p-1}\left[P_{n+1}(x)-\sum^x_{a_n=1}\sum^{a_n}_{a_{n-1}=1}\cdot\cdot\cdot\sum^{a_2}_{a_1=1}s_p(a_1) \right]; \hspace{2mm} n \in \mathbb{Z}^+}
\end{equation*}
\end{corollary}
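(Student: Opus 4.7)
The plan is to prove Corollary 8.3 by induction on $n$, paralleling the structure of the proof of Theorem 8.1 and invoking the alternative form of Legendre's formula stated just before the corollary, $\nu_p(x!)=\frac{x-s_p(x)}{p-1}$, together with the complete additivity of $\nu_p$ that was established in Lemma 6.8.

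For the base case $n=1$, I would start from $sf^{(1)}(x)=\prod_{i=1}^{x} i!$ and apply additivity of $\nu_p$ to get ${}^{sf}\nu_p^{(1)}(x)=\sum_{i=1}^{x}\nu_p(i!)$. Substituting Legendre's alternative formula on the right yields $\frac{1}{p-1}\sum_{i=1}^{x}(i-s_p(i))=\frac{1}{p-1}\left[\tfrac{x(x+1)}{2}-\sum_{a_1=1}^{x}s_p(a_1)\right]$, and since $P_2(x)=\binom{x+1}{2}=\tfrac{x(x+1)}{2}$, this coincides with the $n=1$ instance of the stated formula (in which the nested sum collapses to a single outer sum over $a_1$).

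For the inductive step, assuming the formula at level $m$, I would use the GSF recursion $sf^{(m+1)}(x)=\prod_{i=1}^{x}sf^{(m)}(i)$ together with additivity of $\nu_p$ to write ${}^{sf}\nu_p^{(m+1)}(x)=\sum_{i=1}^{x}{}^{sf}\nu_p^{(m)}(i)$. Substituting the inductive hypothesis and pulling the factor $\frac{1}{p-1}$ outside gives
\begin{equation*}
{}^{sf}\nu_p^{(m+1)}(x)=\frac{1}{p-1}\left[\sum_{i=1}^{x}P_{m+1}(i)\;-\;\sum_{i=1}^{x}\sum_{a_m=1}^{i}\cdots\sum_{a_1=1}^{a_2}s_p(a_1)\right].
\end{equation*}
By equation (\ref{eq:main}) the first inner sum collapses at once to $P_{m+2}(x)$. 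For the second term I would simply relabel the outermost summation index $i$ as $a_{m+1}$, at which point the expression is exactly the $(m+1)$-fold nested sum demanded by the target formula, so the induction closes.

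The main obstacle is purely notational: keeping the nested-sum index structure consistent when the outer $\sum_{i=1}^{x}$ is absorbed into the nest, and being careful about the edge case $n=1$ where the ``nesting'' degenerates to a single sum. No new combinatorial identity beyond equation (\ref{eq:main}) and Legendre's alternative formula is required, so once the indexing is set up carefully the argument is essentially bookkeeping, in the same spirit as the proof of Theorem 8.1.
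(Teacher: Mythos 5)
Your proof is correct and is exactly the induction ``in a fashion similar to that of Theorem 8.1'' that the paper invokes without writing out: the base case via $\nu_p(x!)=\frac{x-s_p(x)}{p-1}$ and $\sum_{i=1}^x i=P_2(x)$, and the inductive step via the GSF recursion, additivity of $\nu_p$, the collapse $\sum_{i=1}^x P_{m+1}(i)=P_{m+2}(x)$ from equation (\ref{eq:main}), and relabelling the outer index as $a_{m+1}$. One small slip: complete additivity of $\nu_p$ is not what Lemma 6.8 establishes (that lemma concerns $\Omega$); the fact you need is the identity $\nu_p(ab)=\nu_p(a)+\nu_p(b)$ already cited in the proof of Theorem 8.1.
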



\begin{thebibliography}{12}
\bibitem{supfac} 
Weisstein, Eric W. "Superfactorial." From MathWorld, A Wolfram Web Resource. 
\texttt{http://mathworld.wolfram.com/Superfactorial.html}
\bibitem{sloane1} 
N.J.A. Sloane,
Sequence A000178 in The On-Line Encyclopedia of Integer Sequences,
\texttt{http://oeis.org/A000178}
\bibitem{hypfac} 
Weisstein, Eric W. "Hyperfactorial." From MathWorld, A Wolfram Web Resource.
\texttt{http://mathworld.wolfram.com/Hyperfactorial.html}
\bibitem{sloane2} 
N.J.A. Sloane,
Sequence A002109 in The On-Line Encyclopedia of Integer Sequences, Alan Sokal
\texttt{http://oeis.org/A002109}
\bibitem{sloane3} 
N.J.A. Sloane,
Sequence A002109 in The On-Line Encyclopedia of Integer Sequences, Vaclav Kotesovec
\texttt{http://oeis.org/A002109}
\bibitem{fignum} 
Weisstein, Eric W. "Figurate Number." From MathWorld--A Wolfram Web Resource.
\texttt{http://mathworld.wolfram.com/FigurateNumber.html}
\bibitem{sloane4}
Sloane, N. J. A. and Plouffe, S. Figure M2535 in The Encyclopedia of Integer Sequences. San Diego: Academic Press, 1995.
\bibitem{Can}
Apostol, Tom M., Introduction to Analytic Number Theory - Springer International Student Edition, Eighth Narosa Publishing House Reprint (1998), ISBN: 978-81-85015-12-5
\bibitem{Mihet}
Miheţ, Dorel. "Legendre’s and Kummer’s theorems again." Resonance 15.12 (2010): 1111-1121.
\end{thebibliography}
\end{document}